\newtheorem{theorem}{Theorem}
\newtheorem{lemma}[theorem]{Lemma}
\newtheorem{remark}[theorem]{Remark}
\newtheorem{proposition}[theorem]{Proposition}
\newtheorem{definition}[theorem]{Definition}
\newcommand{\rd}{\, \mathrm{d}}
\newcommand{\bsa}{\boldsymbol{a}}
\newcommand{\bsc}{\boldsymbol{c}}
\newcommand{\bse}{\boldsymbol{e}}
\newcommand{\bsk}{\boldsymbol{k}}
\newcommand{\bsl}{\boldsymbol{l}}
\newcommand{\bst}{\boldsymbol{t}}
\newcommand{\bsw}{\boldsymbol{w}}
\newcommand{\bsx}{\boldsymbol{x}}
\newcommand{\bsy}{\boldsymbol{y}}
\newcommand{\bsz}{\boldsymbol{z}}
\newcommand{\bszero}{\boldsymbol{0}}
\newcommand{\bsnu}{\boldsymbol{\nu}}
\newcommand{\bssigma}{\boldsymbol{\sigma}}
\newcommand{\CC}{\mathbb{C}}
\newcommand{\FF}{\mathbb{F}}
\newcommand{\NN}{\mathbb{N}}
\newcommand{\RR}{\mathbb{R}}
\newcommand{\ZZ}{\mathbb{Z}}
\newcommand{\ds}{\mathrm{ds}}
\newcommand{\sym}{\mathrm{sym}}
\newcommand{\wal}{\mathrm{wal}}
\newcommand{\Ecal}{\mathcal{E}}
\newcommand{\Hcal}{\mathcal{H}}
\newcommand{\Pcal}{\mathcal{P}}
\newcommand{\Qcal}{\mathcal{Q}}
\newcommand{\Rcal}{\mathcal{R}}
\begin{document}

\title{The $b$-adic symmetrization of digital nets for quasi-Monte Carlo integration\thanks{This work was supported by JSPS Grant-in-Aid for Young Scientists No.15K20964.}}

\author{Takashi Goda\thanks{Graduate School of Engineering, The University of Tokyo, 7-3-1 Hongo, Bunkyo-ku, Tokyo 113-8656, Japan (\tt{goda@frcer.t.u-tokyo.ac.jp})}}

\date{\today}

\maketitle

\begin{abstract}
The notion of symmetrization, also known as Davenport's reflection principle, is well known in the area of the discrepancy theory and quasi-Monte Carlo (QMC) integration. In this paper we consider applying a symmetrization technique to a certain class of QMC point sets called digital nets over $\ZZ_b$. Although symmetrization has been recognized as a geometric technique in the multi-dimensional unit cube, we give another look at symmetrization as a geometric technique in a compact totally disconnected abelian group with dyadic arithmetic operations. Based on this observation we generalize the notion of symmetrization from base 2 to an arbitrary base $b\in \NN$, $b\ge 2$. Subsequently, we study the QMC integration error of symmetrized digital nets over $\ZZ_b$ in a reproducing kernel Hilbert space. The result can be applied to component-by-component construction or Korobov construction for finding good symmetrized (higher order) polynomial lattice rules which achieve high order convergence of the integration error for smooth integrands at the expense of an exponential growth of the number of points with the dimension. Moreover, we consider two-dimensional symmetrized Hammersley point sets in prime base $b$, and prove that the minimum Dick weight is large enough to achieve the best possible order of $L_p$ discrepancy for all $1\le p< \infty$.
\end{abstract}
\emph{Keywords}:\; Quasi-Monte Carlo, $b$-adic symmetrization, digital nets, Hammersley point sets\\
\emph{MSC classifications}:\; 11K38, 65C05.

\section{Introduction}\label{sec:intro}
In this paper we study multivariate integration of functions defined over the $s$-dimensional unit cube. For an integrable function $f:[0,1]^s\to \RR$, we denote the true integral of $f$ by
  \begin{align*}
    I(f) = \int_{[0,1]^s}f(\bsx) \rd \bsx.
  \end{align*}
We consider an approximation of $I(f)$ given in the form
  \begin{align*}
    I(f;P) = \frac{1}{|P|}\sum_{\bsx\in P}f(\bsx),
  \end{align*}
where $P\subset [0,1]^s$ denotes a finite point set in which we count points according to their multiplicity. $I(f; P)$ is called a quasi-Monte Carlo (QMC) integration rule of $f$ over $P$. Obviously the absolute error $|I(f;P)-I(f)|$ depends only on a point set $P$ for a given $f$. There are two prominent classes of point sets: digital nets \cite{DPbook,Niebook} and integration lattices \cite{Niebook,SJbook}. We are concerned with digital nets in this paper.

The quality of a point set has been often measured by the so-called discrepancy \cite{Mbook,Niebook}. For $\bst=(t_1,\ldots,t_s)\in [0,1]^s$, we denote by $[\bszero,\bst)$ the anchored axis-parallel rectangle $[0,t_1)\times [0,t_2)\times \cdots \times [0,t_s)$. The local discrepancy function $\Delta(\cdot;P):[0,1]^s \to \RR$ is defined by
  \begin{align*}
    \Delta(\bst;P) = \frac{1}{|P|}\sum_{\bsx\in P}1_{[\bszero,\bst)}(\bsx)- \prod_{j=1}^{s}t_j ,
  \end{align*}
where $1_{[\bszero,\bst)}$ denotes the characteristic function of $[\bszero,\bst)$. Now for $1\le p\le \infty$ the $L_p$ discrepancy of $P$ is defined as the $L_p$-norm of $\Delta(\cdot;P)$, i.e.,
  \begin{align*}
    L_p(P) := \left( \int_{[0,1]^s}|\Delta(\bst;P)|^p \rd \bst\right)^{1/p} ,
  \end{align*}
with the obvious modification for $p=\infty$. When $f$ has bounded variation $V(f)$ on $[0,1]^s$ in the sense of Hardy and Krause, the absolute error is bounded by
  \begin{align*}
    |I(f;P)-I(f)| \le V(f)L_{\infty}(P).
  \end{align*}
This inequality is called the Koksma-Hlawka inequality \cite[Chapter~2]{Niebook}. An inequality of similar type also holds for $1\le p< \infty$, see for instance \cite{SW98}. This is why we consider the $L_p$ discrepancy as a quality measure of a point set.

It is, however, not an easy task to construct point sets with low $L_p$ discrepancy. As an example, let us consider the two-dimensional Hammersley point sets in base $b$ defined as follows.
\begin{definition}\label{def:hammersley}
Let $b\ge 2$ be a positive integer. For $m\in \NN$, the two-dimensional Hammersley point set in base $b$ consisting of $b^m$ points is defined as
  \begin{align*}
    P_H := \left\{ \left( \frac{a_1}{b}+\cdots + \frac{a_m}{b^m}, \frac{a_m}{b}+\cdots + \frac{a_1}{b^m}\right): a_i\in \{0,1,\ldots,b-1\}\right\}.
  \end{align*}
\end{definition}
\noindent It is known that $P_H$ has optimal order of the $L_{\infty}$ discrepancy, while it does not have optimal order of the $L_p$ discrepancy for all $p\in [1,\infty)$, see for instance \cite{Pill02}. Here the general lower bounds on the $L_p$ discrepancy for all $s\in \NN$ have been shown by Roth \cite{Rot54} for $p\ge 2$, Schmidt \cite{Sch72,Sch77} for $p>1$ and Hal\'{a}sz \cite{Hal81} for $p=1$. Note that the optimal orders of the $L_1$ and $L_{\infty}$ discrepancy are still unknown for $s\ge 3$.

There are several ways to modify $P_H$ such that the modified point set has optimal order of $L_p$ discrepancy for $p\in [1,\infty)$, see for instance \cite{Bil11,Goda14,HKP15}. The notion of symmetrization (also known as Davenport's reflection principle) \cite{Dav56} is one of the best-known remedies in order to achieve optimal order of $L_p$ discrepancy and has been thoroughly studied in the literature, see for instance \cite{HKP15,Krixx,LP01,LP02,Pro88}. We also refer to \cite{DNP14} in which symmetrized point sets are studied in the context of QMC rules using integration lattices.

Although the original symmetrization due to Davenport has been recognized as a geometric technique in the $s$-dimensional unit cube, in this paper, we give another look at symmetrization as that in a compact totally disconnected abelian group with dyadic arithmetic operations. This implies that symmetrization fits quite well with dyadic structure of point sets and also with the tools used for analyzing the point sets. Therefore, it is quite reasonable to consider symmetrized two-dimensional Hammersley point sets in base 2 \cite{HKP15,LP01}, or more generally, symmetrized digital nets in base 2 \cite{LP02}. Although there are some exceptions as in \cite{Krixx,Pro88} where symmetrization is shown to be helpful even if point sets have not dyadic structure, it must be interesting to find a geometric symmetrization technique which acts on a compact totally disconnected abelian group with $b$-dic arithmetic operations for $b\ge 2$.

The aim of this paper is two-fold: to generalize the notion of symmetrization from base 2 to an arbitrary base $b\in \NN$, $b\ge 2$ and to obtain some basic results on the QMC integration error of symmetrized digital nets in base $b$. In particular, we study the worst-case error of symmetrized digital nets in base $b$ in a reproducing kernel Hilbert space (RKHS) in Section~\ref{sec:qmc}. This result for digital nets can be regarded as an analog of the result for lattice rules obtained in \cite[Section~4.2]{DNP14} where only the sum of the half-period cosine space and the Korobov space is considered as a RKHS. In Section~\ref{sec:qmc}, we also study the mean square worst-case error with respect to a random digital shift in a RKHS. Furthermore, in Section~\ref{sec:ham}, we prove that symmetrized Hammersley point sets in base $b$ achieve the best possible order of $L_p$ discrepancy for all $1\le p< \infty$.

{\bf Notation.} Let $\NN$ be the set of positive integers and $\NN_0:=\NN \cup \{0\}$. Let $\CC$ be the set of all complex numbers. For a positive integer $b\ge 2$, $\ZZ_b$ denotes the residue class ring modulo $b$, which is identified with the set $\{0,1,\ldots,b-1\}$ equipped with addition and multiplication modulo $b$. For $x\in [0,1]$, its $b$-adic expansion $x=\sum_{i=1}^{\infty}\xi_i b^{-i}$ with $\xi_i \in \ZZ_b$ is unique in the sense that infinitely many of the $\xi_i$ are different from $b-1$ except for the endpoint $x=1$ for which all $\xi_i$'s are equal to $b-1$. Note that for $1\in \NN$ we use the $b$-adic expansion $1\cdot b^0$, whereas for $1\in [0, 1]$ we use the $b$-adic expansion $(b-1)(b^{-1}+b^{-2}+\ldots)$. It will be clear from the context which expansion we use.

\section{Preliminaries}\label{sec:pre}
Here we recall necessary background and notation, including infinite direct products of $\ZZ_b$, Walsh functions and digital nets (with infinite digit expansions) over $\ZZ_b$. We essentially follow the exposition of \cite[Section~2]{GSY2}.
\subsection{Infinite direct products of $\ZZ_b$}\label{subsec:inf_pro}
Let us define $G=(\ZZ_b)^{\NN}$, which is a compact totally disconnected abelian group with the product topology where $\ZZ_b$ is considered to be a discrete group. We denote by $\oplus$ and $\ominus$ addition and subtraction in $G$, respectively. Let $\nu$ be the product measure on $G$ inherited from $\ZZ_b$, that is, for every cylinder set $E = \prod_{i=1}^n Z_i \times \prod_{i\ge n+1} \ZZ_b$ with $Z_i \subseteq \ZZ_b$ for $1\le i\le n$, we have $\nu(E) = (\prod_{i=1}^n |Z_i|)/{b^n}$.

A character on $G$ is a continuous group homomorphism from $G$ to $\{z\in \CC : |z|=1\}$, which is a multiplicative group of complex numbers whose absolute value is 1. We define the $k$-th character as follows.

\begin{definition}\label{def:char_1}
For a positive integer $b\ge 2$, let $\omega:=\exp(2\pi \sqrt{-1}/b)$ be the primitive $b$-th root of unity. Let $z = (\zeta_1, \zeta_2, \dots) \in G$ and $k \in \NN_0$ whose $b$-adic expansion is given by $k = \kappa_0+\kappa_1b+\dots+\kappa_{a-1}b^{a-1}$ with $\kappa_0,\ldots,\kappa_{a-1}\in \ZZ_b$. Then the $k$-th character $W_k: G \to \{1, \omega, \dots, \omega^{b-1}\}$ is defined as
\begin{align}
W_k(z) := \omega^{\kappa_0 \zeta_1 + \cdots + \kappa_{a-1} \zeta_a}.
\end{align}
\end{definition}
\noindent
We note that every character on $G$ is equal to some $W_k$, see \cite{Ponbook}.

Let us now consider the higher-dimensional case. Let $G^s$ denote the $s$-ary Cartesian product of $G$. Note that $G^s$ is also a compact totally disconnected abelian group with the product topology. The operators $\oplus$ and $\ominus$ now denote addition and subtraction in $G^s$, respectively. We denote by $\bsnu$ the product measure on $G^s$ inherited from $\nu$. The $\bsk$-th character on $G^s$ can be defined as follows.

\begin{definition}\label{def:char_s}
For a positive integer $b\ge 2$ and a dimension $s\in \NN$, let $\bsz=(z_1,\ldots, z_s) \in G^s$ and $\bsk=(k_1,\ldots, k_s)\in \NN_0^s$.
Then the $\bsk$-th character $W_{\bsk}: G^s \to \{1,\omega_b,\ldots, \omega_b^{b-1}\}$
is defined as
  \begin{align*}
    W_{\bsk}(\bsz) := \prod_{j=1}^s W_{k_j}(z_j) .
  \end{align*}
\end{definition}
\noindent
We note again that every character on $G^s$ is equal to some $W_{\bsk}$ as with the one-dimensional case.

The group $G$ is related to the unit interval $[0,1]$ through the following maps $\pi: G \to [0,1]$ and $\sigma: [0,1] \to G$. Let $z=(\zeta_1, \zeta_2, \dots) \in G$, and let $x \in [0,1]$ with its unique $b$-adic expansion $x=\sum_{i=1}^{\infty}\xi_i b^{-i}$ with $\xi_i \in \ZZ_b$. Then the projection map $\pi$ is defined as $\pi(z) := \sum_{i=1}^\infty \zeta_i b^{-i}$ and the section map $\sigma$ is defined as $\sigma(x) := (\xi_1, \xi_2, \dots)$. By definition, $\pi$ is surjective and $\sigma$ is injective.  For the $s$-dimensional case, both the projection and section maps are applied componentwise, through which the group $G^s$ is related to the unit cube $[0,1]^s$. We note that $\pi$ is a continuous map and that $\pi \circ \sigma=\mathrm{id}_{[0,1]^s}$. We summarize some important facts below \cite[Lemma~4, Propositions~3 and 5]{GSY2}.

\begin{proposition}\label{prop:inf_prod}
The following holds true:
\begin{enumerate}
\item For $k \in \NN_0$, we have
  \begin{align*}
    \int_{G} W_{k}(z)\rd \nu(z) = \begin{cases}
     1 & \text{if $k=0$},  \\
     0 & \text{otherwise}.
    \end{cases}
  \end{align*}
\item For $\bsk,\bsl\in \NN_0^s$, we have
  \begin{align*}
    \int_{G^s} W_{\bsk}(\bsz)\overline{W_{\bsl}(\bsz)}\rd \bsnu(\bsz) = \begin{cases}
     1 & \text{if $\bsk=\bsl$},  \\
     0 & \text{otherwise}.
    \end{cases}
  \end{align*}
\item For $f \in L^1(G^s)$, we have
  \begin{align*}
    \int_{G^s} f(\bsz) \rd \bsnu(\bsz) = \int_{[0,1]^s} f (\sigma(\bsx)) \rd \bsx.
  \end{align*}
\item For $f \in L^1([0,1]^s)$, we have
  \begin{align*}
    \int_{[0,1]^s} f(\bsx)  \rd \bsx = \int_{G^s} f(\pi(\bsz)) \rd \bsnu(\bsz).
  \end{align*}
\item Let $H_n:=\{z=(\zeta_1, \zeta_2, \dots) \in G: \zeta_1=\zeta_2=\cdots =\zeta_n=0 \}$. Then we have
  \begin{align*}
    \sum_{\substack{\bsk \in \NN_0^s\\ k_j<b^n, \forall j}} W_{\bsk}(\bsz) = \begin{cases}
     b^{sn} & \text{if $\bsz \in H_n^s$}, \\
     0 & \text{otherwise}.
\end{cases}
  \end{align*}
\end{enumerate}
\end{proposition}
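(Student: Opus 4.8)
The plan is to treat the five claims in two groups. Claims (1), (2) and (5) are pure character computations that reduce, through the product structure of $\nu$ and $\bsnu$, to the elementary root-of-unity identity
\begin{align*}
\sum_{\zeta=0}^{b-1}\omega^{c\zeta} = \begin{cases} b & \text{if } c\equiv 0 \pmod b,\\ 0 & \text{otherwise,}\end{cases}
\end{align*}
which is just the geometric sum of the $b$-th roots of unity. Claims (3) and (4) are measure-theoretic change-of-variables statements asserting that $\sigma$ and $\pi$ are measure-preserving between $([0,1]^s,\mathrm{Leb})$ and $(G^s,\bsnu)$.

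For (1), I would write the $b$-adic expansion $k=\kappa_0+\cdots+\kappa_{a-1}b^{a-1}$ and use that $W_k(z)=\omega^{\kappa_0\zeta_1+\cdots+\kappa_{a-1}\zeta_a}$ depends only on the first $a$ coordinates. Since $\nu$ is a product measure, the integral factorizes as $\prod_{i=1}^{a}\big(b^{-1}\sum_{\zeta_i=0}^{b-1}\omega^{\kappa_{i-1}\zeta_i}\big)$; if $k=0$ every factor is $1$, while if $k\ne 0$ some digit $\kappa_{i-1}\in\{1,\ldots,b-1\}$ is nonzero and the corresponding factor vanishes by the identity above. Claim (2) then follows by noting that $W_{\bsk}(\bsz)\overline{W_{\bsl}(\bsz)}$ is again a character $W_{\bsm}(\bsz)$, where the $j$-th block of digits of $\bsm$ is the componentwise difference mod $b$ of the digits of $k_j$ and $l_j$ (the exponent change is harmless because $\omega$ has order $b$); factorizing over the $s$ coordinates and applying (1) coordinatewise shows the integral equals $1$ exactly when every $m_j=0$, i.e. when $\bsk=\bsl$. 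For (5), the same factorization gives $\sum_{\bsk:\,k_j<b^n}W_{\bsk}(\bsz)=\prod_{j=1}^{s}\big(\sum_{k=0}^{b^n-1}W_k(z_j)\big)$, and in each factor I would sum over the $n$ free digits $\kappa_0,\ldots,\kappa_{n-1}$ to obtain $\prod_{i=1}^{n}\big(\sum_{\kappa\in\ZZ_b}\omega^{\kappa\zeta_i}\big)$, which equals $b^n$ if $\zeta_1=\cdots=\zeta_n=0$ and $0$ otherwise; this is precisely the indicator of $z_j\in H_n$, so the product equals $b^{sn}$ on $H_n^s$ and $0$ elsewhere.

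For (3), I would first establish measure-preservation on cylinder sets: for $E=\prod_{i=1}^{n}Z_i\times\prod_{i>n}\ZZ_b$, the preimage $\sigma^{-1}(E)$ is the set of $x\in[0,1]$ whose first $n$ $b$-adic digits lie in $Z_1\times\cdots\times Z_n$, a finite union of elementary $b$-adic intervals of total Lebesgue measure $(\prod_{i=1}^n|Z_i|)/b^n=\nu(E)$. Hence $\int_{[0,1]}1_E(\sigma(x))\rd x=\nu(E)=\int_G 1_E\rd\nu$. Passing to the $s$-fold product, then to simple functions by linearity and to general $f\in L^1(G^s)$ by the standard approximation and monotone convergence argument, yields (3). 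Claim (4) is then immediate from (3) applied to $g=f\circ\pi$ together with $\pi\circ\sigma=\mathrm{id}_{[0,1]^s}$, since $\int_{G^s}f(\pi(\bsz))\rd\bsnu(\bsz)=\int_{[0,1]^s}f(\pi(\sigma(\bsx)))\rd\bsx=\int_{[0,1]^s}f(\bsx)\rd\bsx$.

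The main obstacle is the measure-theoretic bookkeeping in (3) rather than any of the algebra: the section map $\sigma$ is only a right inverse of $\pi$, and the two fail to be mutually inverse precisely on the $b$-adic rationals, where expansions are non-unique. One must check that this exceptional set carries both Lebesgue measure zero and $\bsnu$-measure zero so that the change of variables is unaffected, and one must justify the extension from indicators of cylinders to arbitrary $L^1$ functions. Everything else is a direct consequence of the product structure of the measures and the orthogonality of roots of unity.
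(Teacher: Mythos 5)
Your proof is correct; note, however, that the paper itself gives no proof of this proposition but simply imports it from \cite[Lemma~4, Propositions~3 and 5]{GSY2}. Your argument --- the orthogonality of $b$-th roots of unity combined with the product structure of $\nu$ for items (1), (2) and (5), and measure-preservation of $\sigma$ on cylinder sets extended to $L^1$ by the standard machine for items (3) and (4), with (4) following from (3) via $\pi\circ\sigma=\mathrm{id}$ --- is the standard route taken in that reference, and you correctly flag the only delicate point, namely that the non-uniqueness of $b$-adic expansions is confined to a set that is null for both Lebesgue measure and $\bsnu$.
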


\subsection{Walsh functions}\label{subsec:walsh}
Walsh functions play a central role in the analysis of digital nets. We refer to \cite[Appendix~A]{DPbook} for general information on Walsh functions in the context of numerical integration. We first give the definition for the one-dimensional case.

\begin{definition}
For a positive integer $b\ge 2$, let $\omega_b=\exp(2\pi \sqrt{-1}/b)$. We denote the $b$-adic expansion of $k\in \NN_0$ by $k = \kappa_0+\kappa_1b+\dots+\kappa_{a-1}b^{a-1}$ with $\kappa_0,\ldots,\kappa_{a-1}\in \ZZ_b$. Then the $k$-th $b$-adic Walsh function ${}_b\wal_k : [0,1]\to \{1,\omega_b,\dots,\omega_b^{b-1}\}$ is defined as
  \begin{align*}
    {}_b\wal_k(x) := \omega_b^{\kappa_0\xi_1+\dots+\kappa_{a-1}\xi_a} ,
  \end{align*}
for $x\in [0,1]$ with its unique $b$-adic expansion $x=\xi_1b^{-1}+\xi_2b^{-2}+\cdots$.
\end{definition}
\noindent
This definition can be generalized to the higher-dimensional case.

\begin{definition}
For a positive integer $b\ge 2$ and a dimension $s\in \NN$, let $\bsx=(x_1,\ldots, x_s)\in [0,1]^s$ and $\bsk=(k_1,\ldots, k_s)\in \NN_0^s$. Then the $\bsk$-th $b$-adic Walsh function ${}_b\wal_{\bsk}: [0,1]^s \to \{1,\omega_b,\ldots, \omega_b^{b-1}\}$ is defined as
  \begin{align*}
    {}_b\wal_{\bsk}(\bsx) := \prod_{j=1}^s {}_b\wal_{k_j}(x_j) .
  \end{align*}
\end{definition}
Since we shall always use Walsh functions in a fixed base $b$, we omit the subscript and simply write $\wal_k$ or $\wal_{\bsk}$ in this paper.
From the definitions of characters on $G^s$ and Walsh functions, we see that for any $\bsx\in [0,1]^s$
\begin{equation}\label{eq:WandWalsh}
\wal_{\bsk}(\bsx) = W_{\bsk}(\sigma (\bsx)).
\end{equation}
 
Since the system $\{\wal_{\bsk}: \bsk\in \NN_0^s\}$ is a complete orthonormal system in $L_2([0,1]^s)$ \cite[Theorem~A.11]{DPbook}, we have a Walsh series expansion for $f\in L_2([0,1]^s)$
  \begin{align*}
     \sum_{\bsk\in \NN_0^s}\hat{f}(\bsk)\wal_{\bsk} ,
  \end{align*}
where the $\bsk$-th Walsh coefficient is given by
  \begin{align*}
     \hat{f}(\bsk) = \int_{[0,1]^s} f(\bsx)\overline{\wal_{\bsk}(\bsx)}\rd \bsx .
  \end{align*}
We refer to \cite[Appendix~A.3]{DPbook} and \cite[Lemma~18]{GSY1} for a discussion about the pointwise absolute convergence of the Walsh series to $f$.

\subsection{Digital nets}\label{subsec:digital_net}
Here we define digital nets in $G^s$ by using infinite-column generating matrices, i.e., generating matrices whose each column can contain infinitely many non-zero entries. This definition has been recently given in \cite{GSY2}.

\begin{definition}\label{def:digital_net}
For $m,s\in \NN$, let $C_1,\ldots,C_s\in \ZZ_b^{\NN \times m}$. For $0\le n<b^m$, denote the $b$-adic expansion of $n$ by $n=\sum_{i=0}^{m-1}\nu_ib^{i}$ with $\nu_i\in \ZZ_b$. Put $\bsz_n=(z_{n,1},\ldots,z_{n,s})\in G^s$ with
  \begin{align*}
     z_{n,j} = (\nu_0,\nu_1,\ldots,\nu_{m-1}) \cdot C_{j}^{\top} ,
  \end{align*}
for $1\le j\le s$. Then the set $\Pcal=\{\bsz_0,\ldots,\bsz_{b^m-1}\}\subset G^s$ is called a digital net over $\ZZ_b$ in $G^s$ with generating matrices $C_1,\ldots,C_s$.

Moreover, the set $P:=\{\pi(\bsz)\colon \bsz\in \Pcal\}\subset [0,1]^s$ is called a digital net over $\ZZ_b$ in $[0,1]^s$ with generating matrices $C_1,\ldots,C_s$.
\end{definition}

We note that every digital net in $G^s$ is a $\ZZ_b$-module of $G^s$ as well as a subgroup of $G^s$. If each column of generating matrices consists of only finitely many non-zero entries, the above definition of a digital net over $\ZZ_b$ in $[0,1]^s$ reduces to that given by Niederreiter \cite{Niebook}.

The dual net of a digital net plays an important role in the subsequent analysis. For a digital net $\Pcal$ in $G^s$, we denote its dual net by $\Pcal^{\perp}\subset \NN_0^s$ which is defined as follows.
\begin{definition}\label{def:dual_net}
Let $\Pcal$ be a digital net in $G^s$. The dual net of $\Pcal$ is defined as
  \begin{align*}
     \Pcal^{\perp}:=\{\bsk=(k_1,\ldots,k_s)\in \NN_0^s: \vec{k}_1C_1\oplus \dots \oplus \vec{k}_sC_s= (0,\ldots,0)\in \ZZ_b^m \} ,
  \end{align*}
where we write $\vec{k}_j=(\kappa_{j,0},\kappa_{j,1},\ldots)$ for $k_j$ with its $b$-adic expansion $k_j=\kappa_{j,0}+\kappa_{j,1}b+\cdots$, which is indeed a finite expansion.
\end{definition}

Since $W_{\bsk}$'s are characters on $G^s$, the following lemma can be established from Definition~\ref{def:dual_net}, which connects a digital net in $G^s$ with characters.
\begin{lemma}\label{lem:dual_Walsh}
Let $\Pcal$ be a digital net in $G^s$ and $\Pcal^{\perp}$ be its dual net. For $\bsk\in \NN_0^s$, we have
  \begin{align*}
     \sum_{\bsz\in \Pcal} W_{\bsk}(\bsz) = \begin{cases}
     |\Pcal| & \text{if $\bsk\in \mathcal{P}^{\perp}$},  \\
     0 & \text{otherwise}.  \\
    \end{cases}
  \end{align*}
\end{lemma}

\section{The $b$-adic symmetrization}\label{sec:sym}
In this section we generalize the notion of symmetrization from base 2 to an arbitrary base $b\in \NN$, $b\ge 2$. Before that, we recall the original symmetrization introduced by Davenport \cite{Dav56} and give another look at it as a geometric technique in $G^s$ with $b=2$. Let $P\subset [0,1]^2$ be a finite point set. Then the symmetrized point set in the sense of Davenport is defined as
  \begin{align*}
     P^{\sym,D}:=\{(x,y) \cup (x,1-y): (x,y)\in P\}.
  \end{align*}
It is often the case that the symmetrized point set is defined as
  \begin{align*}
     P^{\sym}:=\{(x,y) \cup (x,1-y) \cup (1-x,y) \cup (1-x,1-y) : (x,y)\in P\}.
  \end{align*}
In the remainder of this paper we only consider the symmetrized point set defined in the latter sense. For the higher-dimensional case, the symmetrized point set of $P\subset [0,1]^s$ is defined as
  \begin{align*}
     P^{\sym}:=\left\{\sym_u(\bsx): \bsx \in P, u\subseteq \{1,\ldots,s\}\right\},
  \end{align*}
where $\sym_u(\bsx)$ denotes the $s$-dimensional vector whose $j$-th coordinate is $1-x_j$ if $j\in u$ and $x_j$ otherwise, that is, $\sym_u(\bsx)=(y_1,\ldots,y_s)$ with
  \begin{align*}
     y_j = \begin{cases}
     1-x_j & \text{if $j\in u$,} \\
     x_j & \text{otherwise.} \\
     \end{cases}
  \end{align*}
By definition, we have $|P^{\sym}|=2^s |P|$.

Here we give another look at the original symmetrization. Let $b=2$ and $z=(\zeta_1,\zeta_2,\ldots)\in G$ with $\zeta_i\in \ZZ_2$. By denoting $e=(1,1,\ldots)\in G$, we have $z\oplus e=(1-\zeta_1,1-\zeta_2,\ldots)\in G$ and thus
  \begin{align*}
     \pi(z\oplus e) & =\frac{1-\zeta_1}{2}+\frac{1-\zeta_2}{2^2}+\cdots \\
     & = \left( \frac{1}{2}+\frac{1}{2^2}+\cdots \right) -\left( \frac{\zeta_1}{2}+\frac{\zeta_2}{2^2}+\cdots \right) = 1-\pi(z).
  \end{align*}
For $\bsz\in G^s$, we denote by $\sym^G_u(\bsz)$ the $s$-dimensional vector whose $j$-th coordinate is $z_j\oplus e$ if $j\in u$ and $z_j$ otherwise. Then the symmetrized point set of $\Pcal\subset G^s$ can be given by 
  \begin{align*}
     \Pcal^{\sym}:=\left\{\sym^G_u(\bsz): \bsz \in \Pcal, u\subseteq \{1,\ldots,s\}\right\}.
  \end{align*}

As a natural extension from $b=2$ to an arbitrary positive integer $b\ge 2$, we now introduce the notion of the $b$-adic symmetrization. For $l\in \ZZ_b$, we write $e_l=(l,l,\ldots)\in G$. For a vector $\bsl=(l_1,\dots,l_s)\in \ZZ_b^s$, we write $\bse_{\bsl}=(e_{l_1},\ldots,e_{l_s})\in G^s$. 
\begin{definition}\label{def:b_sym}
For a point set $\Pcal\subset G^s$, its symmetrized point set is defined as
  \begin{align*}
     \Pcal^{\sym}:=\left\{\bsz\oplus \bse_{\bsl} : \bsz \in \Pcal, \bsl \in \ZZ_b^s\right\}.
  \end{align*}
For a point set $P\subset [0,1]^s$, its symmetrized point set is defined as
  \begin{align*}
     P^{\sym} = \pi\left[(\sigma(P))^{\sym}\right].
  \end{align*}
\end{definition}
\noindent By definition, we have $|\Pcal^{\sym}|=b^s |\Pcal|$ and $|P^{\sym}|=b^s |P|$.

\subsection{Symmetrized digital nets}\label{subsec:sym_digital_net}
In the following let $\Pcal$ be a digital net in $G^s$. We write $\Qcal=\{\bse_{\bsl} : \bsl \in \ZZ_b^s\}$, which is also a digital net in $G^s$. From Definition~\ref{def:b_sym}, the symmetrized point set $\Pcal^{\sym}$ can be regarded as the direct sum of two digital nets $\Pcal$ and $\Qcal$. Thus, it is obvious that the following holds true.

\begin{lemma}\label{lem:sym_digital_net}
For a digital net $\Pcal$ in $G^s$, let $\Pcal^{\sym}$ be the symmetrized point set of $\Pcal$. Then $\Pcal^{\sym}$ is also a digital net in $G^s$.
\end{lemma}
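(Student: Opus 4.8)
The plan is to prove the lemma by exhibiting explicit generating matrices for $\Pcal^{\sym}$, built from those of $\Pcal$ together with those of the auxiliary net $\Qcal$. Since $\Pcal^{\sym}$ is, by Definition~\ref{def:b_sym}, nothing but the set of sums $\{\bsz \oplus \bsw : \bsz \in \Pcal,\ \bsw \in \Qcal\}$, the real content is that the $\oplus$-sum of two digital nets in $G^s$ is again a digital net, and the construction should make this transparent.

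First I would record generating matrices for $\Qcal=\{\bse_{\bsl} : \bsl \in \ZZ_b^s\}$. Since $|\Qcal|=b^s$, I look for matrices $D_1,\ldots,D_s\in \ZZ_b^{\NN\times s}$, and I take $D_j$ to be the matrix each of whose (infinitely many) rows equals the $j$-th standard unit vector of $\ZZ_b^s$. Then for $0\le n<b^s$ with digits $(\nu_0,\ldots,\nu_{s-1})$, the $j$-th component of the generated point is $(\nu_0,\ldots,\nu_{s-1})\cdot D_j^\top=(\nu_{j-1},\nu_{j-1},\ldots)=e_{\nu_{j-1}}$, so as $n$ runs over $\{0,\ldots,b^s-1\}$ the generated points run over exactly $\{\bse_{\bsl}:\bsl\in\ZZ_b^s\}$. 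Hence $\Qcal$ is a digital net with generating matrices $D_1,\ldots,D_s$.

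Next, writing $C_1,\ldots,C_s\in \ZZ_b^{\NN\times m}$ for the generating matrices of $\Pcal$, I claim $\Pcal^{\sym}$ is the digital net with $b^{m+s}$ points whose $j$-th generating matrix is the horizontal concatenation $[C_j\mid D_j]\in \ZZ_b^{\NN\times(m+s)}$. To verify this, I take $0\le N<b^{m+s}$ and split its digit vector as $(\mu_0,\ldots,\mu_{m+s-1})$, regarding $(\mu_0,\ldots,\mu_{m-1})$ as the digits of some $0\le n<b^m$ and $(\mu_m,\ldots,\mu_{m+s-1})$ as the digits of some $0\le n'<b^s$. Because matrix-vector multiplication over $\ZZ_b$ respects the block decomposition of $[C_j\mid D_j]$, the $j$-th component of the generated point is $\bigl((\mu_0,\ldots,\mu_{m-1})\cdot C_j^\top\bigr)\oplus\bigl((\mu_m,\ldots,\mu_{m+s-1})\cdot D_j^\top\bigr)$, i.e. the $\oplus$-sum of the $j$-th components of the point of $\Pcal$ indexed by $n$ and the point of $\Qcal$ indexed by $n'$. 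As $N$ ranges over $\{0,\ldots,b^{m+s}-1\}$, the pair $(n,n')$ ranges over all of $\{0,\ldots,b^m-1\}\times\{0,\ldots,b^s-1\}$, so the generated points are exactly $\{\bsz\oplus\bsw : \bsz\in\Pcal,\ \bsw\in\Qcal\}=\Pcal^{\sym}$, counted with the correct multiplicity, since $b^{m+s}=b^s\cdot b^m=b^s|\Pcal|$ agrees with $|\Pcal^{\sym}|=b^s|\Pcal|$.

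The argument is essentially bookkeeping, so I do not expect a deep obstacle; the one point requiring care is the indexing in the last step — confirming that the $b$-adic digits of $N$ split cleanly into the digits of $n$ and $n'$ and that the block structure of $[C_j\mid D_j]$ genuinely yields the $\oplus$-sum componentwise. Since multiplicities are retained throughout (points of digital nets are counted with multiplicity), no collapsing of coincident points needs to be tracked, and the matching of cardinalities serves as a consistency check on the construction.
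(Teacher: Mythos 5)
Your proof is correct and follows essentially the same route as the paper: the paper declares the lemma obvious from viewing $\Pcal^{\sym}$ as the direct sum of the digital nets $\Pcal$ and $\Qcal$, and Remark~\ref{rem:sym_matrix} records exactly your concatenated generating matrices $D_j=(C_j,E_j)$, with $E_j$ the matrix whose every row is the $j$-th standard unit vector. Your explicit verification of the digit-splitting and the block multiplication simply fills in the bookkeeping the paper omits.
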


\begin{remark}\label{rem:sym_matrix}
Consider a digital net $\Pcal$ in $G^s$ constructed with infinite-row generating matrices $C_1,\ldots,C_s\in \ZZ_b^{\NN \times m}$. Then $\Pcal^{\sym}$ is a digital net in $G^s$ whose generating matrices $D_1,\ldots,D_s\in \ZZ_b^{\NN \times (m+s)}$ are given as
  \begin{align*}
     D_j = (C_j, E_j) \quad \text{with} \quad  E_j= \bordermatrix{
         & 1 & \cdots & j-1 & j & j+1 & \cdots & s \cr
         & 0 & \cdots & 0 & 1 & 0 & \cdots & 0 \cr
         & 0 & \cdots & 0 & 1 & 0 & \cdots & 0 \cr
         & 0 & \cdots & 0 & 1 & 0 & \cdots & 0 \cr
         & & \ddots & & \vdots & & \ddots &  \cr} \in \ZZ_b^{\NN \times s} .
  \end{align*}
\end{remark}

For a point set $\Pcal^{\sym}$, we have the following orthogonal property.
\begin{lemma}\label{lem:sym_dual_Walsh}
Let $\Pcal$ be a digital net in $G^s$ and $\Pcal^{\sym}$ be its symmetrized point set. For $\bsk\in \NN_0^s$, we have
  \begin{align*}
     \sum_{\bsz\in \Pcal^{\sym}} W_{\bsk}(\bsz) = \begin{cases}
     |\Pcal^{\sym}| & \text{if $\bsk\in \Pcal^{\perp}\cap \Ecal^s$},  \\
     0 & \text{otherwise}.  \\
    \end{cases}
  \end{align*}
In the above, $\Ecal := \{k\in \NN_0: \delta(k)\equiv 0 \pmod b\}$ where $\delta(k)$ denotes the $b$-adic sum-of-digits of $k$ and is given as $\delta(k):=\kappa_0+\kappa_1+\cdots$ for $k=\kappa_0+\kappa_1 b+\cdots$.
\end{lemma}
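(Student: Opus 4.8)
The plan is to exploit the fact that $\Pcal^{\sym}$ is the direct sum (as a multiset) of the two digital nets $\Pcal$ and $\Qcal=\{\bse_{\bsl}:\bsl\in\ZZ_b^s\}$, together with the multiplicativity of characters. Since each $W_{\bsk}$ is a group homomorphism on $G^s$, we have $W_{\bsk}(\bsz\oplus\bse_{\bsl})=W_{\bsk}(\bsz)W_{\bsk}(\bse_{\bsl})$, so that the single sum over $\Pcal^{\sym}$ (counted with multiplicity) factors as a double sum:
\begin{align*}
\sum_{\bsz\in\Pcal^{\sym}}W_{\bsk}(\bsz) = \sum_{\bsz\in\Pcal}\sum_{\bsl\in\ZZ_b^s}W_{\bsk}(\bsz)W_{\bsk}(\bse_{\bsl}) = \left(\sum_{\bsz\in\Pcal}W_{\bsk}(\bsz)\right)\left(\sum_{\bsl\in\ZZ_b^s}W_{\bsk}(\bse_{\bsl})\right).
\end{align*}
The first factor is already handled by Lemma~\ref{lem:dual_Walsh}: it equals $|\Pcal|$ when $\bsk\in\Pcal^{\perp}$ and vanishes otherwise. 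Thus the whole content of the lemma reduces to evaluating the second factor, which is where the sum-of-digits condition defining $\Ecal$ will appear.

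To evaluate the second factor I would write $\bse_{\bsl}=(e_{l_1},\ldots,e_{l_s})$ and use the product form of $W_{\bsk}$ to reduce to the one-dimensional quantity $W_{k}(e_l)$. Since $e_l=(l,l,\ldots)$, Definition~\ref{def:char_1} gives, for $k$ with $b$-adic expansion $k=\kappa_0+\kappa_1 b+\cdots+\kappa_{a-1}b^{a-1}$,
\begin{align*}
W_k(e_l) = \omega^{\kappa_0 l+\cdots+\kappa_{a-1}l} = \omega^{l\,\delta(k)}.
\end{align*}
Summing over $l\in\ZZ_b$ is then a geometric sum: $\sum_{l=0}^{b-1}\omega^{l\delta(k)}$ equals $b$ when $\omega^{\delta(k)}=1$, i.e. when $\delta(k)\equiv 0\pmod b$ (equivalently $k\in\Ecal$), and equals $0$ otherwise. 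Taking the product over the $s$ coordinates yields
\begin{align*}
\sum_{\bsl\in\ZZ_b^s}W_{\bsk}(\bse_{\bsl}) = \prod_{j=1}^s\sum_{l\in\ZZ_b}W_{k_j}(e_l) = \begin{cases} b^s & \text{if $\bsk\in\Ecal^s$},\\ 0 & \text{otherwise}.\end{cases}
\end{align*}

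Finally I would combine the two factors. Their product is nonzero precisely when $\bsk$ lies in both $\Pcal^{\perp}$ and $\Ecal^s$, in which case it equals $|\Pcal|\cdot b^s=|\Pcal^{\sym}|$ by the identity $|\Pcal^{\sym}|=b^s|\Pcal|$ recorded after Definition~\ref{def:b_sym}; otherwise it is $0$. This is exactly the claimed formula. I do not expect a genuine obstacle here, since the argument is essentially a direct computation; the only point demanding a little care is the bookkeeping of multiplicities when passing from the single sum over $\Pcal^{\sym}$ to the double sum over $\Pcal\times\ZZ_b^s$, which is legitimate precisely because symmetrized point sets are defined as multisets of size $b^s|\Pcal|$.
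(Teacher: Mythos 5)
Your proposal is correct and follows essentially the same route as the paper's own proof: factor the sum over $\Pcal^{\sym}$ into the product of the sum over $\Pcal$ (handled by Lemma~\ref{lem:dual_Walsh}) and the sum over $\bsl\in\ZZ_b^s$, then evaluate the latter coordinatewise as a geometric sum $\sum_{l\in\ZZ_b}\omega_b^{l\delta(k_j)}$, which produces exactly the condition $\bsk\in\Ecal^s$. The multiplicity bookkeeping you flag at the end is handled identically in the paper, so there is nothing to add.
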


\begin{proof}
From the definition of the $b$-adic symmetrization, we have
  \begin{align*}
     \sum_{\bsz\in \Pcal^{\sym}} W_{\bsk}(\bsz) & = \sum_{\bsz\in \Pcal}\sum_{\bsl\in \ZZ_b^s} W_{\bsk}(\bsz\oplus \bse_{\bsl}) \\
     & = \sum_{\bsz\in \Pcal}W_{\bsk}(\bsz)\sum_{\bsl\in \ZZ_b^s} W_{\bsk}(\bse_{\bsl}) .
  \end{align*}
Since $\Pcal$ is a digital net in $G^s$, the first sum in the last expression equals $|\Pcal|$ if $\bsk\in \Pcal^{\perp}$ and 0 otherwise. On the second sum in the last expression, we have
  \begin{align*}
     \sum_{\bsl\in \ZZ_b^s} W_{\bsk}(\bse_{\bsl}) & = \sum_{\bsl\in \ZZ_b^s} \prod_{j=1}^{s}\omega_b^{l_j \delta(k_j)} = \prod_{j=1}^{s} \sum_{l_j\in \ZZ_b} \omega_b^{l_j \delta(k_j)} .
  \end{align*}
As $\omega_b$ denotes the primitive $b$-th root of unity, the inner sum on the rightmost-side above equals $b$ if $\delta(k_j)\equiv 0 \pmod b$ and 0 otherwise. Thus,
  \begin{align*}
     \sum_{\bsl\in \ZZ_b^s} W_{\bsk}(\bse_{\bsl}) = \begin{cases}
     b^s & \text{if $\bsk\in \Ecal^s$,} \\
     0 & \text{otherwise.} \\
     \end{cases}
  \end{align*}
All together, we obtain
  \begin{align*}
     \sum_{\bsz\in \Pcal^{\sym}} W_{\bsk}(\bsz) = \begin{cases}
     b^s|\Pcal| & \text{if $\bsk\in P^{\perp}\cap \Ecal^s$,} \\
     0 & \text{otherwise.} \\
     \end{cases}
  \end{align*}
Since $|\Pcal^{\sym}|=b^s |\Pcal|$, the result follows.
\end{proof}
\noindent Combining Lemmas~\ref{lem:dual_Walsh}, \ref{lem:sym_digital_net} and \ref{lem:sym_dual_Walsh} implies that
  \begin{align*}
     (\Pcal^{\sym})^{\perp}=\Pcal^{\perp}\cap \Ecal^s .
  \end{align*}

\begin{remark}
In fact, the argument in this subsection can be generalized in the following way. Let $\Pcal$ and $\Pcal'$ be digital nets in $G^s$. Consider the direct sum
  \begin{align*}
     \Rcal= \{\bsz\oplus \bsz': \bsz\in \Pcal, \bsz'\in \Pcal' \}.
  \end{align*}
Then $\Rcal$ is also a digital net in $G^s$ and satisfies the orthogonal property
  \begin{align*}
     \sum_{\bsz\in \Rcal} W_{\bsk}(\bsz) = \begin{cases}
     |\Rcal| & \text{if $\bsk\in \Pcal^{\perp}\cap (\Pcal')^{\perp}$},  \\
     0 & \text{otherwise}.  \\
    \end{cases}
  \end{align*}
In the remainder of this paper, however, we only consider the case $\Pcal'=\Qcal$, which gives us $\Rcal=\Pcal^{\sym}$.
\end{remark}

\section{QMC integration over symmetrized digital nets}\label{sec:qmc}
Let us consider a reproducing kernel Hilbert space (RKHS) $\Hcal$ with reproducing kernel $K:[0,1]^s\times [0,1]^s\to \RR$. The inner product in $\Hcal$ is denoted by $\langle f,g \rangle_{\Hcal}$ for $f,g\in \Hcal$ and the associated norm is denoted by $\lVert f\rVert_{\Hcal}:=\sqrt{\langle f,f \rangle_{\Hcal}}$. It is known that if $\int_{[0,1]^s}\sqrt{K(\bsx,\bsx)}\rd \bsx<\infty$ the squared worst-case error in the space $\Hcal$ of a QMC integration over a point set $P\subset [0,1]^s$ is given by
  \begin{align}\label{eq:worst-case_error}
     e^2(P,K) & := \left( \sup_{\substack{f\in \Hcal \\ \|f\|_{\Hcal}\le 1}}|I(f)-I(f;P)|\right)^2 \nonumber \\ 
     & = \int_{[0,1]^{2s}}K(\bsx,\bsy)\rd \bsx \rd \bsy-\frac{2}{|P|}\sum_{\bsx\in P}\int_{[0,1]^s}K(\bsx,\bsy)\rd \bsy+\frac{1}{|P|^2}\sum_{\bsx,\bsy\in P}K(\bsx,\bsy) ,
  \end{align}
see for instance \cite{SW98}. For $\bsk, \bsl \in \NN_0^s$, the $(\bsk,\bsl)$-th Walsh coefficient of $K$ is defined by
  \begin{align*}
     \hat{K}(\bsk,\bsl) := \int_{[0,1]^{2s}} K(\bsx,\bsy)\overline{\wal_{\bsk}(\bsx)}\wal_{\bsl}(\bsy)\rd \bsx \rd \bsy.
\end{align*}
In the following we always assume $\int_{[0,1]^s}\sqrt{K(\bsx,\bsx)}\rd \bsx<\infty$. We study the worst-case error of symmetrized point sets in a RKHS first and then move on to the mean square worst-case error with respect to a random digital shift.

\subsection{The worst-case error}\label{sec:worst}
From the proof of \cite[Proposition~21]{GSY2}, we have the following pointwise absolute convergence of the Walsh series of $K$.
\begin{lemma}\label{lem:kernel_walsh_conv}
Let $K$ be a continuous reproducing kernel. We assume $$\sum_{\bsk,\bsl\in \NN_0^s}|\hat{K}(\bsk,\bsl)|<\infty.$$
For any $\bsz,\bsw\in G^s$, we have
\begin{align}\label{eq:kernel_walsh_conv}
    K(\pi(\bsz), \pi(\bsw)) = \sum_{\bsk,\bsl\in \NN_0^s}\hat{K}(\bsk,\bsl)W_{\bsk}(\bsz)\overline{W_{\bsl}(\bsw)}.
\end{align}
\end{lemma}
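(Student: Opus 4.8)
The goal is to establish the pointwise identity \eqref{eq:kernel_walsh_conv}, expressing the continuous kernel $K$ evaluated at $(\pi(\bsz),\pi(\bsw))$ as an absolutely convergent Walsh/character series. My plan is to first show that the double series on the right-hand side of \eqref{eq:kernel_walsh_conv} converges absolutely and uniformly on $G^s\times G^s$, and then to identify its limit with $K(\pi(\bsz),\pi(\bsw))$ by checking that both sides have the same Walsh coefficients and invoking completeness.

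\emph{First} I would treat convergence. By hypothesis $\sum_{\bsk,\bsl\in \NN_0^s}|\hat{K}(\bsk,\bsl)|<\infty$, and since each character satisfies $|W_{\bsk}(\bsz)|=|\overline{W_{\bsl}(\bsw)}|=1$ (their values lie in $\{1,\omega_b,\ldots,\omega_b^{b-1}\}$), the general term of the series is bounded in absolute value by $|\hat{K}(\bsk,\bsl)|$, uniformly in $(\bsz,\bsw)$. Hence by the Weierstrass $M$-test the series converges absolutely and uniformly on $G^s\times G^s$ to some continuous function, call it $\widetilde K(\bsz,\bsw)$. Because $\pi$ is continuous and $G^s$ is compact, the left-hand side $(\bsz,\bsw)\mapsto K(\pi(\bsz),\pi(\bsw))$ is also continuous on $G^s\times G^s$.

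\emph{Second} I would match the two continuous functions. Using \eqref{eq:WandWalsh}, namely $\wal_{\bsk}(\bsx)=W_{\bsk}(\sigma(\bsx))$, together with the pushforward formula in Proposition~\ref{prop:inf_prod}(3) (which rewrites integrals over $[0,1]^s$ as integrals of $f\circ\sigma$ over $G^s$), I would compute the character-Fourier coefficients of $\widetilde K$ against $W_{\bsk}(\bsz)\overline{W_{\bsl}(\bsw)}$. Integrating the uniformly convergent series term by term (justified by uniform convergence) and applying the orthonormality relation in Proposition~\ref{prop:inf_prod}(2) collapses the double sum to the single surviving term, yielding exactly $\hat K(\bsk,\bsl)$. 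On the other side, the definition of $\hat K(\bsk,\bsl)$ as an integral over $[0,1]^{2s}$ transported to $G^{2s}$ via Proposition~\ref{prop:inf_prod}(3) shows that $K(\pi(\cdot),\pi(\cdot))$ has the same coefficients. Thus $\widetilde K$ and $K\circ(\pi\times\pi)$ are two continuous functions on the compact group $G^s\times G^s$ with identical expansions in the complete orthonormal character system, so they coincide everywhere.

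\emph{The main obstacle} I anticipate is the rigorous justification of the termwise integration and, more subtly, the passage from ``equal Fourier coefficients'' to ``equal functions'' at \emph{every} point rather than almost everywhere. Equality of Fourier coefficients only gives equality in $L^2(G^s\times G^s)$ a priori; to upgrade this to a genuine pointwise identity I must use that \emph{both} functions are continuous on the compact group and that the characters separate points, so that an $L^2$-null difference of continuous functions vanishes identically. Carefully tracking the interplay between the sections/projections $\sigma,\pi$ and the measure identities of Proposition~\ref{prop:inf_prod}—in particular that $\pi\circ\sigma=\mathrm{id}$ but $\sigma\circ\pi\neq\mathrm{id}$ in general—is where the argument needs the most care, since the coefficient computation lives naturally on $G^s$ while $\hat K$ is defined on $[0,1]^s$.
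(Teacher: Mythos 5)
Your argument is correct, but it follows a genuinely different route from the paper's. The paper (deferring to the proof of \cite[Proposition~21]{GSY2}, and spelled out in the analogous Lemma~\ref{lem:ds_kernel_walsh_conv}) works directly with the partial sums over $k_j,l_j<b^n$: it transports the Walsh coefficients to $G^{2s}$ via Item~4 of Proposition~\ref{prop:inf_prod}, collapses the character sum using the Dirichlet-kernel identity of Item~5, and identifies the partial sum as $b^{2sn}$ times the average of $K\circ(\pi\times\pi)$ over a shrinking cylinder neighbourhood of $(\bsz,\bsw)$, which converges to the pointwise value by continuity of $K\circ\pi$. You instead establish uniform convergence of the series to a continuous limit $\widetilde K$, match the character-Fourier coefficients of $\widetilde K$ and of $K\circ(\pi\times\pi)$ on $G^{2s}$, and conclude by completeness of the character system together with the fact that $\bsnu$ has full support, so that a continuous $L^2$-null function vanishes identically. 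Both arguments are sound; yours is shorter and more abstract but leans on Peter--Weyl completeness for $G^{2s}$ and the full-support property of the Haar measure, whereas the paper's is constructive and only needs the finite orthogonality relations already listed in Proposition~\ref{prop:inf_prod}. One small refinement: when transporting $\hat K(\bsk,\bsl)$ to $G^{2s}$ it is cleaner to use Item~3 of Proposition~\ref{prop:inf_prod} together with $\pi\circ\sigma=\mathrm{id}$ and \eqref{eq:WandWalsh}, which sidesteps entirely the $\nu$-null set on which $\sigma\circ\pi\neq\mathrm{id}$ that you flag as a concern.
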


Under some assumptions on $K$, the worst-case error is given as follows.
\begin{theorem}\label{thm:worst_case_sym_digital_net}
Let $\Pcal, \Pcal^{\perp}$ be a digital net in $G^s$ and its dual net, respectively, and $\Pcal^{\sym}$ be the symmetrized point set of $\Pcal$. Let $K$ be a continuous reproducing kernel which satisfies $$\int_{[0,1]^s}\sqrt{K(\bsx,\bsx)}\rd \bsx<\infty \quad \text{and}\quad \sum_{\bsk,\bsl\in \NN_0^s}|\hat{K}(\bsk,\bsl)|<\infty.$$ The squared worst-case error of a QMC integration over $\pi(\Pcal^{\sym})$ is given by
  \begin{align*}
     e^2(\pi(\Pcal^{\sym}), K) = \sum_{\bsk,\bsl\in \Pcal^{\perp}\cap \Ecal^s\setminus \{\bszero\}}\hat{K}(\bsk,\bsl) .
  \end{align*}
\end{theorem}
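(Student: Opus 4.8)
The plan is to insert the character expansion of $K$ from Lemma~\ref{lem:kernel_walsh_conv} into the three-term formula \eqref{eq:worst-case_error} and to reduce every integral and every point-set sum to an orthogonality relation already at hand. First I would pass from $[0,1]^s$ to $G^s$: by Proposition~\ref{prop:inf_prod}(3)--(4) the integrals in \eqref{eq:worst-case_error} become integrals over $G^s$ and $G^{2s}$ against $\bsnu$, and the point set $P=\pi(\Pcal^{\sym})$ is replaced by its preimage $\Pcal^{\sym}\subset G^s$, so that every evaluation of $K$ has the form $K(\pi(\bsz),\pi(\bsw))$. Using the hypothesis $\sum_{\bsk,\bsl}|\hat K(\bsk,\bsl)|<\infty$ to justify interchanging the absolutely convergent series
\begin{align*}
K(\pi(\bsz),\pi(\bsw))=\sum_{\bsk,\bsl\in\NN_0^s}\hat K(\bsk,\bsl)\,W_{\bsk}(\bsz)\overline{W_{\bsl}(\bsw)}
\end{align*}
with the finite point-set sums and the $\bsnu$-integrals, I would substitute this into each of the three terms.

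Next I would evaluate the terms separately. For the double integral, Proposition~\ref{prop:inf_prod}(1) applied coordinatewise forces $\bsk=\bsl=\bszero$ and gives $\hat K(\bszero,\bszero)$. For the mixed term, the $\bsw$-integral forces $\bsl=\bszero$, while $\sum_{\bsz\in\Pcal^{\sym}}W_{\bsk}(\bsz)$ is governed by Lemma~\ref{lem:sym_dual_Walsh} and equals $|\Pcal^{\sym}|$ exactly when $\bsk\in\Pcal^{\perp}\cap\Ecal^s$; this produces $2\sum_{\bsk\in\Pcal^{\perp}\cap\Ecal^s}\hat K(\bsk,\bszero)$. For the double point-set sum, applying Lemma~\ref{lem:sym_dual_Walsh} once in $\bsz$ and once in $\bsw$ (the conjugated sum obeys the same index condition, its value being real) gives $\sum_{\bsk,\bsl\in\Pcal^{\perp}\cap\Ecal^s}\hat K(\bsk,\bsl)$. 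Writing $S:=\Pcal^{\perp}\cap\Ecal^s$, the three contributions combine to
\begin{align*}
e^2(\pi(\Pcal^{\sym}),K)=\hat K(\bszero,\bszero)-2\sum_{\bsk\in S}\hat K(\bsk,\bszero)+\sum_{\bsk,\bsl\in S}\hat K(\bsk,\bsl).
\end{align*}

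Finally I would match this against the claimed $\sum_{\bsk,\bsl\in S\setminus\{\bszero\}}\hat K(\bsk,\bsl)$. Peeling off the row $\bsk=\bszero$ and the column $\bsl=\bszero$ from that sum, the two expressions agree precisely when $\sum_{\bsk\in S}\hat K(\bsk,\bszero)=\sum_{\bsl\in S}\hat K(\bszero,\bsl)$, and I expect this to be the only delicate point. It holds because $\hat K(\bsk,\bszero)$ is the $\bsk$-th Walsh coefficient of the real-valued function $\bsx\mapsto\int_{[0,1]^s}K(\bsx,\bsy)\rd\bsy$, so by Lemma~\ref{lem:sym_dual_Walsh} the sum $\sum_{\bsk\in S}\hat K(\bsk,\bszero)$ is the QMC average of this real function over $\pi(\Pcal^{\sym})$ and hence real; together with the symmetry and real-valuedness of the reproducing kernel, which give $\hat K(\bszero,\bsl)=\overline{\hat K(\bsl,\bszero)}$, this forces the two boundary sums to coincide. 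The combination then collapses to the stated formula, the remaining work (the interchange of summation and integration and the coordinatewise use of the orthogonality relations) being routine under the absolute-convergence hypothesis.
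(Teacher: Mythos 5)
Your proposal is correct and follows essentially the same route as the paper's proof: expand $K$ via Lemma~\ref{lem:kernel_walsh_conv}, transfer all integrals to $G^s$ with Proposition~\ref{prop:inf_prod}, and reduce the point-set sums with Lemma~\ref{lem:sym_dual_Walsh}. The only divergence is in the mixed term, where the paper first splits it into two halves using the symmetry of $K$ so that the boundary sums $\sum_{\bsk}\hat K(\bsk,\bszero)$ and $\sum_{\bsl}\hat K(\bszero,\bsl)$ appear directly and cancel against the double sum, whereas you evaluate it as $2\sum_{\bsk\in\Pcal^{\perp}\cap\Ecal^s}\hat K(\bsk,\bszero)$ and then establish the needed identity $\sum_{\bsk}\hat K(\bsk,\bszero)=\sum_{\bsl}\hat K(\bszero,\bsl)$ by the (correct) observation that the first sum is real and the two are complex conjugates of each other; both resolutions are valid.
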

\noindent Although the proof is almost the same with that of \cite[Proposition~21]{GSY2}, we provide it below for the sake of completeness.

\begin{proof}
We evaluate the three terms of (\ref{eq:worst-case_error}) separately in which $\pi(\Pcal^{\sym})$ is substituted into $P$. The first term of (\ref{eq:worst-case_error}) is simply
  \begin{align*}
     \int_{[0,1]^{2s}}K(\bsx,\bsy)\rd \bsx \rd \bsy = \hat{K}(\bszero,\bszero) ,
  \end{align*}
by the definition of the Walsh coefficients. For the second term of (\ref{eq:worst-case_error}), we have
\begin{align*}
     & \quad \frac{2}{|\Pcal^{\sym}|}\sum_{\bsz\in \Pcal^{\sym}}\int_{[0,1]^s}K(\pi(\bsz),\bsy)\rd \bsy \\
     & = \frac{1}{|\Pcal^{\sym}|}\sum_{\bsz\in \Pcal^{\sym}}\int_{[0,1]^s}K(\pi(\bsz),\bsy)\rd \bsy +\frac{1}{|\Pcal^{\sym}|}\sum_{\bsz\in \Pcal^{\sym}}\int_{[0,1]^s}K(\bsy,\pi(\bsz))\rd \bsy \\
     & = \frac{1}{|\Pcal^{\sym}|}\sum_{\bsz\in \Pcal^{\sym}}\int_{G^s}K(\pi(\bsz),\pi(\bsw))\rd \bsnu(\bsw) +\frac{1}{|\Pcal^{\sym}|}\sum_{\bsz\in \Pcal^{\sym}}\int_{G^s}K(\pi(\bsw),\pi(\bsz))\rd \bsnu(\bsw)\\
     & = \frac{1}{|\Pcal^{\sym}|}\sum_{\bsz\in \Pcal^{\sym}}\sum_{\bsk,\bsl\in \NN_0^s}\hat{K}(\bsk,\bsl)W_{\bsk}(\bsz)\int_{G^s}\overline{W_{\bsl}(\bsw)}\rd \bsnu(\bsw) \\
     & \quad + \frac{1}{|\Pcal^{\sym}|}\sum_{\bsz\in \Pcal^{\sym}}\sum_{\bsk,\bsl\in \NN_0^s}\hat{K}(\bsk,\bsl)\overline{W_{\bsl}(\bsz)}\int_{G^s}W_{\bsk}(\bsw)\rd \bsnu(\bsw) \\
     & = \sum_{\bsk\in \NN_0^s}\hat{K}(\bsk,\bszero) \frac{1}{|\Pcal^{\sym}|}\sum_{\bsz\in \Pcal^{\sym}}W_{\bsk}(\bsz)+\sum_{\bsl\in \NN_0^s}\hat{K}(\bszero,\bsl) \overline{\frac{1}{|\Pcal^{\sym}|}\sum_{\bsz\in \Pcal^{\sym}}W_{\bsl}(\bsz)} \\
     & = \sum_{\bsk\in \Pcal^{\perp}\cap \Ecal^s}\hat{K}(\bsk,\bszero)+\sum_{\bsl\in \Pcal^{\perp}\cap \Ecal^s}\hat{K}(\bszero,\bsl) ,
  \end{align*}
where we use the symmetry of $K$, Item~4 of Proposition~\ref{prop:inf_prod}, Equation~(\ref{eq:kernel_walsh_conv}), Item~1 of Proposition~\ref{prop:inf_prod} and Lemma~\ref{lem:sym_dual_Walsh} in this order for each of the five equalities. Finally, for the last term of (\ref{eq:worst-case_error}), we have
  \begin{align*}
 & \quad \frac{1}{|\Pcal^{\sym}|^2}\sum_{\bsz,\bsw\in \Pcal^{\sym}}K(\pi(\bsz),\pi(\bsw)) \\
 & = \frac{1}{|\Pcal^{\sym}|^2}\sum_{\bsz,\bsw\in \Pcal^{\sym}}\sum_{\bsk,\bsl\in \NN_0^s}\hat{K}(\bsk,\bsl)W_{\bsk}(\bsz)\overline{W_{\bsl}(\bsw)} \\
 & = \sum_{\bsk,\bsl\in \NN_0^s}\hat{K}(\bsk,\bsl)\frac{1}{|\Pcal^{\sym}|}\sum_{\bsz\in \Pcal^{\sym}}W_{\bsk}(\bsz)\overline{\frac{1}{|\Pcal^{\sym}|}\sum_{\bsw\in \Pcal^{\sym}}W_{\bsl}(\bsw)} \\
 & = \sum_{\bsk,\bsl\in \Pcal^{\perp}\cap \Ecal^s}\hat{K}(\bsk,\bsl) ,
  \end{align*}
where we use Equation~(\ref{eq:kernel_walsh_conv}) and Lemma~\ref{lem:sym_dual_Walsh} in the first and third equalities, respectively. Substituting these results into the right-hand side of (\ref{eq:worst-case_error}), the result follows.
\end{proof}

\begin{remark}\label{rem:sym_vs_fold}
The result of Theorem~\ref{thm:worst_case_sym_digital_net} has some similarity to that of \cite[Proposition~22]{GSY2}. When a QMC integration over a folded digital net by means of the $b$-adic tent transformation is considered, the squared worst-case error in a RKHS is given by
  \begin{align*}
     \sum_{\bsk,\bsl\in \Pcal^{\perp}\cap \Ecal^s\setminus \{\bszero\}}\hat{K}(\lfloor \bsk/b\rfloor, \lfloor \bsl/b\rfloor ) ,
  \end{align*}
where we write $\lfloor \bsx\rfloor = (\lfloor x_1\rfloor, \ldots,\lfloor x_s\rfloor)$ for $\bsx=(x_1,\ldots,x_s)\in \RR^s$. Since the number of points of a folded digital net is the same as that of an original digital net, a folded digital net has a cost advantage over a symmetrized digital net.
\end{remark}

\subsection{The mean square worst-case error}\label{sec:ms-worst}
Here we study the mean square worst-case error of symmetrized point sets with respect to a random digital shift. Now the error criterion is given by
  \begin{align*}
     \tilde{e}^2(\pi(\Pcal^{\sym}), K) = \int_{[0,1]^s}e^2(\pi(\Pcal^{\sym})\oplus \bssigma, K) \rd \bssigma ,
  \end{align*}
where the operator $\oplus$ is defined for $\bsx,\bsy\in [0,1]^s$ as
  \begin{align*}
     \bsx \oplus \bsy := \pi\left[ \sigma(\bsx)\oplus \sigma(\bsy) \right] .
  \end{align*}
From \cite[Theorem~12.4]{DPbook}, we have the following.
\begin{lemma}\label{lem:ds_kernel}
For a point set $\Pcal\subset G^s$ and a reproducing kernel $K\in L_2([0,1]^{2s})$, the mean square worst-case error of a set $\pi(\Pcal)$ with respect to a random digital shift is given by
  \begin{align*}
     \tilde{e}^2(\pi(\Pcal), K) = e^2(\pi(\Pcal), K_{\ds}) ,
  \end{align*}
where $K_{\ds}$ is called a digital shift invariant kernel defined as
  \begin{align*}
     K_{\ds}(\bsx,\bsy) := \int_{[0,1]^s}K(\bsx\oplus \bssigma, \bsy\oplus \bssigma) \rd \bssigma,
  \end{align*}
for any $\bsx,\bsy\in [0,1]^s$.
\end{lemma}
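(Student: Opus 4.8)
The plan is to start from the explicit three-term expression (\ref{eq:worst-case_error}) for the squared worst-case error, apply it to the digitally shifted point set $\pi(\Pcal)\oplus\bssigma$, integrate over $\bssigma\in[0,1]^s$, and match the result term by term against $e^2(\pi(\Pcal),K_{\ds})$ as given by (\ref{eq:worst-case_error}) with the kernel $K_{\ds}$. First I would write $e^2(\pi(\Pcal)\oplus\bssigma,K)$ as the sum of the constant term $\int_{[0,1]^{2s}}K(\bsx,\bsy)\rd\bsx\rd\bsy$, the linear term $-\tfrac{2}{|\Pcal|}\sum_{\bsz\in\Pcal}\int_{[0,1]^s}K(\pi(\bsz)\oplus\bssigma,\bsy)\rd\bsy$, and the quadratic term $\tfrac{1}{|\Pcal|^2}\sum_{\bsz,\bsw\in\Pcal}K(\pi(\bsz)\oplus\bssigma,\pi(\bsw)\oplus\bssigma)$, using that a digital shift leaves the cardinality unchanged. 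Integrating over $\bssigma$ and interchanging integration and summation by Fubini's theorem (justified by $K\in L_2([0,1]^{2s})$), the quadratic term becomes $\tfrac{1}{|\Pcal|^2}\sum_{\bsz,\bsw\in\Pcal}K_{\ds}(\pi(\bsz),\pi(\bsw))$ immediately from the definition of the digital shift invariant kernel.

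The crux is to identify the integrated constant and linear terms with the corresponding terms of $e^2(\pi(\Pcal),K_{\ds})$, and for this I would establish that the digital shift $\bsy\mapsto\bsy\oplus\bssigma$ is measure-preserving on $[0,1]^s$. The cleanest route is to lift the argument to $G^s$: the translation $\bsz\mapsto\bsz\oplus\sigma(\bssigma)$ preserves the Haar measure $\bsnu$, and for $\bsnu$-almost every $\bsz$ one has $\sigma(\pi(\bsz))=\bsz$, so that $\pi(\bsz)\oplus\bssigma=\pi(\bsz\oplus\sigma(\bssigma))$ almost everywhere. Combining this with Items~3 and 4 of Proposition~\ref{prop:inf_prod}, which transfer integrals between $G^s$ and $[0,1]^s$, yields $\int_{[0,1]^s}g(\bsy\oplus\bssigma)\rd\bsy=\int_{[0,1]^s}g(\bsy)\rd\bsy$ for integrable $g$; the $b$-adic rationals, where $\pi$ and $\sigma$ fail to be mutually inverse, form a null set and can be ignored under integration.

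Applying the substitution $\bsy'=\bsy\oplus\bssigma$ to the inner integral of the linear term and interchanging the order of integration, I would obtain
\[
\int_{[0,1]^s}\int_{[0,1]^s}K(\pi(\bsz)\oplus\bssigma,\bsy)\rd\bsy\rd\bssigma = \int_{[0,1]^s}\int_{[0,1]^s}K(\pi(\bsz)\oplus\bssigma,\bsy'\oplus\bssigma)\rd\bssigma\rd\bsy' = \int_{[0,1]^s}K_{\ds}(\pi(\bsz),\bsy')\rd\bsy',
\]
which matches the linear term of $e^2(\pi(\Pcal),K_{\ds})$. Likewise, applying the same measure-preserving substitution in both variables shows $\int_{[0,1]^{2s}}K_{\ds}(\bsx,\bsy)\rd\bsx\rd\bsy=\int_{[0,1]^{2s}}K(\bsx,\bsy)\rd\bsx\rd\bsy$, so the constant term is unchanged. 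Matching all three integrated terms against (\ref{eq:worst-case_error}) with kernel $K_{\ds}$ then gives $\tilde{e}^2(\pi(\Pcal),K)=e^2(\pi(\Pcal),K_{\ds})$.

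The main obstacle I anticipate is the rigorous justification of the measure-preserving property together with the interchanges of integration, since the pointwise definition $\bsx\oplus\bsy=\pi[\sigma(\bsx)\oplus\sigma(\bsy)]$ behaves irregularly on the $b$-adic rationals. Lifting to $G^s$, where $\oplus$ is the genuine group operation and $\bsnu$ is translation-invariant Haar measure, and pushing forward to $[0,1]^s$ only through Items~3 and 4 of Proposition~\ref{prop:inf_prod} at the end, sidesteps these boundary subtleties cleanly; the hypothesis $K\in L_2([0,1]^{2s})$ then secures the applicability of Fubini's theorem throughout.
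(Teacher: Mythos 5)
Your proof is correct, but note that the paper does not actually prove this lemma at all: it simply imports it as \cite[Theorem~12.4]{DPbook}, so there is no internal argument to compare against. What you have written is a correct self-contained derivation along the standard lines of that cited result: expand the three-term formula (\ref{eq:worst-case_error}) for the shifted set, integrate over $\bssigma$, and match terms against the same formula with kernel $K_{\ds}$. The quadratic term is immediate from the definition of $K_{\ds}$, and your treatment of the constant and linear terms correctly isolates the one genuinely nontrivial ingredient, namely that $\bsy\mapsto\bsy\oplus\bssigma$ preserves Lebesgue measure on $[0,1]^s$; lifting to $G^s$, invoking translation invariance of the Haar measure $\bsnu$, and descending via Items~3 and 4 of Proposition~\ref{prop:inf_prod} while discarding the $\bsnu$-null set where $\sigma\circ\pi\neq\mathrm{id}$ is exactly the right way to handle the $b$-adic boundary pathologies of the pointwise operation $\bsx\oplus\bsy=\pi[\sigma(\bsx)\oplus\sigma(\bsy)]$. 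The only point you leave implicit is that under the bare hypothesis $K\in L_2([0,1]^{2s})$ the function $\bssigma\mapsto K(\pi(\bsz)\oplus\bssigma,\pi(\bsw)\oplus\bssigma)$ is integrable only for almost every pair of arguments, so $K_{\ds}$ is a priori defined only a.e.\ rather than at the specific net points; this is a defect of the lemma's stated hypotheses rather than of your argument (in the paper's applications $K$ is continuous, which removes the issue), but a sentence acknowledging it would make the proof airtight.
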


Similarly to Lemma~\ref{lem:kernel_walsh_conv}, we have the following pointwise absolute convergence of the Walsh series of $K_{\ds}$.
\begin{lemma}\label{lem:ds_kernel_walsh_conv}
Let $K$ be a continuous reproducing kernel. We assume $$\sum_{\bsk\in \NN_0^s}|\hat{K}(\bsk,\bsk)|<\infty.$$
For any $\bsz,\bsw\in G^s$, we have
\begin{align}\label{eq:ds_kernel_walsh_conv}
    K_{\ds}(\pi(\bsz), \pi(\bsw)) = \sum_{\bsk\in \NN_0^s}\hat{K}(\bsk,\bsk)W_{\bsk}(\bsz)\overline{W_{\bsk}(\bsw)}.
\end{align}
\end{lemma}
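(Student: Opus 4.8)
The plan is to follow the same strategy as in Lemma~\ref{lem:kernel_walsh_conv}, the only genuinely new ingredient being an explicit computation of the Walsh coefficients of the digitally shift invariant kernel $K_{\ds}$. Crucially, we cannot simply insert the expansion (\ref{eq:kernel_walsh_conv}) into the definition of $K_{\ds}$, because that expansion requires the \emph{full} absolute summability $\sum_{\bsk,\bsl}|\hat{K}(\bsk,\bsl)|<\infty$, whereas here only the diagonal sum is assumed; the coefficient computation is precisely the device that lets us get away with the weaker hypothesis.

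First I would compute, for $\bsk,\bsl\in\NN_0^s$, the Walsh coefficient $\hat{K}_{\ds}(\bsk,\bsl)=\int_{[0,1]^{2s}}K_{\ds}(\bsx,\bsy)\overline{\wal_{\bsk}(\bsx)}\wal_{\bsl}(\bsy)\rd\bsx\rd\bsy$. Inserting the definition of $K_{\ds}$ from Lemma~\ref{lem:ds_kernel} and exchanging the order of integration (licit since $K$ is continuous, hence bounded, on the compact set $[0,1]^{2s}$, and the Walsh functions are bounded), one is led, for each fixed $\bssigma$, to the substitution replacing $\bsx$ and $\bsy$ by $\bsx\oplus\bssigma$ and $\bsy\oplus\bssigma$. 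A digital shift by a fixed element is a measure-preserving bijection of $[0,1]^s$, and the Walsh functions obey the multiplicative law $\wal_{\bsk}(\bsx\oplus\bssigma)=\wal_{\bsk}(\bsx)\wal_{\bsk}(\bssigma)$ for all but the $b$-adic rational arguments (a direct consequence of $\wal_{\bsk}=W_{\bsk}\circ\sigma$ in (\ref{eq:WandWalsh}) together with the character property of $W_{\bsk}$). Hence the inner double integral in $(\bsx,\bsy)$ factors out as $\hat{K}(\bsk,\bsl)$, while the remaining integral in $\bssigma$ collapses, by orthonormality of the Walsh system, to $\int_{[0,1]^s}\wal_{\bsk}(\bssigma)\overline{\wal_{\bsl}(\bssigma)}\rd\bssigma=\delta_{\bsk,\bsl}$. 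This yields the key identity $\hat{K}_{\ds}(\bsk,\bsl)=\delta_{\bsk,\bsl}\,\hat{K}(\bsk,\bsk)$: the digital shift annihilates every off-diagonal coefficient and leaves the diagonal ones unchanged.

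With this in hand, the hypothesis $\sum_{\bsk\in\NN_0^s}|\hat{K}(\bsk,\bsk)|<\infty$ is exactly the statement that $\sum_{\bsk,\bsl\in\NN_0^s}|\hat{K}_{\ds}(\bsk,\bsl)|<\infty$. I would then repeat the argument of Lemma~\ref{lem:kernel_walsh_conv} verbatim, now applied to $K_{\ds}$ in place of $K$: the series $\sum_{\bsk}\hat{K}(\bsk,\bsk)W_{\bsk}(\bsz)\overline{W_{\bsk}(\bsw)}$ converges absolutely and uniformly on $G^s\times G^s$ (Weierstrass criterion, since $|W_{\bsk}|\equiv 1$) to a continuous function, whose coefficients against the complete orthonormal character system (Item~2 of Proposition~\ref{prop:inf_prod}) agree with those of $(\bsz,\bsw)\mapsto K_{\ds}(\pi(\bsz),\pi(\bsw))$ as just computed (using Item~3 of Proposition~\ref{prop:inf_prod} and (\ref{eq:WandWalsh}) to pass between integrals over $G^{2s}$ and $[0,1]^{2s}$). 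Equality of the two sides then follows, and this is exactly (\ref{eq:ds_kernel_walsh_conv}).

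The step requiring the most care is the passage from equality of coefficients to the pointwise identity claimed for \emph{every} $\bsz,\bsw\in G^s$ rather than merely $\bsnu\times\bsnu$-almost everywhere; this is precisely where the diagonal absolute-summability hypothesis does its work, guaranteeing that the character series defines a genuinely continuous function on $G^s\times G^s$ and thereby pinning down the value of $K_{\ds}\circ(\pi\times\pi)$ at every point. I expect this to be the main obstacle, just as it is the crux of Lemma~\ref{lem:kernel_walsh_conv}; the Fubini interchange and the measure-preserving change of variables in the coefficient computation are routine by comparison. As a consistency check, note that under the stronger assumption $\sum_{\bsk,\bsl}|\hat{K}(\bsk,\bsl)|<\infty$ the identity drops out immediately by substituting (\ref{eq:kernel_walsh_conv}) into $K_{\ds}$ and integrating the product of characters $W_{\bsk}(\bst)\overline{W_{\bsl}(\bst)}$ over $\bst\in G^s$ via Item~2 of Proposition~\ref{prop:inf_prod}, which recovers the same diagonal collapse.
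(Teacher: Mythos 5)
Your overall strategy --- diagonalize the Walsh coefficients of $K_{\ds}$ and then feed $K_{\ds}$ back into Lemma~\ref{lem:kernel_walsh_conv} --- is genuinely different from the paper's, which never computes $\hat{K}_{\ds}(\bsk,\bsl)$ at all: it takes the partial sums $\sum_{\bsk,\, k_j<b^n}\hat{K}(\bsk,\bsk)W_{\bsk}(\bsz)\overline{W_{\bsk}(\bsw)}$, rewrites each coefficient as an integral over $G^{2s}$, applies Item~5 of Proposition~\ref{prop:inf_prod} to collapse the character sum to $b^{sn}$ times the indicator of $\{\bsw'\ominus\bsz'\in H_n^s\}$, and lets $n\to\infty$ using only the continuity of $K\circ\pi$; the limit is $\int_{G^s}K(\pi(\bsz\oplus\bsw'),\pi(\bsw\oplus\bsw'))\rd\bsnu(\bsw')$, which is then identified with $K_{\ds}(\pi(\bsz),\pi(\bsw))$. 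Your computation $\hat{K}_{\ds}(\bsk,\bsl)=\delta_{\bsk,\bsl}\hat{K}(\bsk,\bsk)$ is correct and standard.

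The gap sits exactly at the step you flag as the crux, and your proposed resolution does not close it. Lemma~\ref{lem:kernel_walsh_conv} is stated for a \emph{continuous} kernel, and its proof uses that continuity to obtain the identity at \emph{every} point of $G^{2s}$; so to apply it ``verbatim'' to $K_{\ds}$ you must first show that $K_{\ds}$ is continuous, or at least that $K_{\ds}\circ(\pi\times\pi)$ is continuous on $G^{2s}$. Neither is automatic: the digital shift $\bsx\mapsto\bsx\oplus\bssigma$ is discontinuous at $b$-adic points, and $K_{\ds}(\bsx,\bsy)$ is (a.e.) a function of $\bsy\ominus\bsx$ alone, which generically jumps across $b$-adic hyperplanes even when $K$ is continuous. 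Your argument that the absolutely convergent character series ``pins down the value of $K_{\ds}\circ(\pi\times\pi)$ at every point'' is a non sequitur: continuity of one side of an almost-everywhere identity does not determine the other side pointwise. What actually rescues the everywhere statement is the identity $K_{\ds}(\pi(\bsz),\pi(\bsw))=\int_{G^s}K(\pi(\bsz\oplus\bsw'),\pi(\bsw\oplus\bsw'))\rd\bsnu(\bsw')$, valid for every $(\bsz,\bsw)$ because the two integrands differ only on a $\bsnu$-null set of $\bsw'$, and whose right-hand side is continuous on $G^{2s}$; this is precisely the computation the paper carries out and your proposal omits. With that supplementary lemma your route goes through; without it, the coefficient argument only yields (\ref{eq:ds_kernel_walsh_conv}) for $\bsnu\times\bsnu$-almost all $(\bsz,\bsw)$, which is not enough for Theorem~\ref{thm:ms_worst_case_sym_digital_net}, where the identity is evaluated at the points of a digital net --- exactly the $b$-adic points where the discontinuities sit.
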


\begin{proof}
By the assumption $\sum_{\bsk\in \NN_0^s}|\hat{K}(\bsk,\bsk)|<\infty$, the right-hand side on (\ref{eq:ds_kernel_walsh_conv}) converges absolutely. Thus it suffices to show that
\begin{align*}
    \lim_{n\to \infty} \sum_{\substack{\bsk\in \NN_0^s \\ k_j<b^n, \forall j}}\hat{K}(\bsk,\bsk)W_{\bsk}(\bsz)\overline{W_{\bsk}(\bsw)} = K_{\ds}(\pi(\bsz), \pi(\bsw)).
\end{align*}
In fact we have
\begin{align*}
    & \quad \sum_{\substack{\bsk\in \NN_0^s \\ k_j<b^n, \forall j}}\hat{K}(\bsk,\bsk)W_{\bsk}(\bsz)\overline{W_{\bsk}(\bsw)} \\
    & = \sum_{\substack{\bsk\in \NN_0^s \\ k_j<b^n, \forall j}} W_{\bsk}(\bsz)\overline{W_{\bsk}(\bsw)}  \int_{[0,1]^{2s}}K(\bsx,\bsy)\overline{\wal_{\bsk}(\bsx)}\wal_{\bsk}(\bsy)\rd \bsx \rd \bsy \\
    & = \sum_{\substack{\bsk\in \NN_0^s \\ k_j<b^n, \forall j}} W_{\bsk}(\bsz)\overline{W_{\bsk}(\bsw)}  \int_{G^{2s}}K(\pi(\bsz'),\pi(\bsw'))\overline{W_{\bsk}(\bsz')}W_{\bsk}(\bsw')\rd \bsnu(\bsz') \rd \bsnu(\bsw') \\
    & = \int_{G^{2s}}K(\pi(\bsz'),\pi(\bsw'))\sum_{\substack{\bsk\in \NN_0^s \\ k_j<b^n, \forall j}} W_{\bsk}((\bsz\ominus \bsz')\ominus (\bsw\ominus \bsw')) \rd \bsnu(\bsz') \rd \bsnu(\bsw') \\
    & = \int_{G^{2s}}K(\pi(\bsz\oplus \bsz'),\pi(\bsw\oplus \bsw'))\sum_{\substack{\bsk\in \NN_0^s \\ k_j<b^n, \forall j}} W_{\bsk}(\bsw'\ominus \bsz') \rd \bsnu(\bsz') \rd \bsnu(\bsw') ,
\end{align*}
where we use Item~4 of Proposition~\ref{prop:inf_prod} twice in the second equality. Similarly to the proof of \cite[Proposition~21]{GSY2}, let us define two sets $H_n:=\{z=(\zeta_1, \zeta_2, \dots) \in G: \zeta_1=\zeta_2=\cdots =\zeta_n=0 \}$ and $J_{n,2s}:=\{(\bsz,\bsw)\in G^{2s}: \bsw\ominus \bsz\in H_n^s\}$. Then from Item~5 of Proposition~\ref{prop:inf_prod}, we have
\begin{align*}
    \sum_{\substack{\bsk\in \NN_0^s \\ k_j<b^n, \forall j}} W_{\bsk}(\bsw'\ominus \bsz') = \begin{cases}
    b^{sn} & \text{if $(\bsz',\bsw')\in J_{n,2s}$,} \\
    0      & \text{otherwise.} \\
    \end{cases}
\end{align*}
Thus we have
\begin{align*}
    & \quad \sum_{\substack{\bsk\in \NN_0^s \\ k_j<b^n, \forall j}}\hat{K}(\bsk,\bsk)W_{\bsk}(\bsz)\overline{W_{\bsk}(\bsw)} \\
    & = b^{sn}\int_{J_{n,2s}}K(\pi(\bsz\oplus \bsz'),\pi(\bsw\oplus \bsw')) \rd \bsnu(\bsz') \rd \bsnu(\bsw') \\
    & = \int_{G^s}b^{sn}\int_{\bsw'\ominus H_n^s}K(\pi(\bsz\oplus \bsz'),\pi(\bsw\oplus \bsw')) \rd \bsnu(\bsz') \rd \bsnu(\bsw') \\
    & \to \int_{G^s}K(\pi(\bsz\oplus \bsw'),\pi(\bsw\oplus \bsw'))\rd \bsnu(\bsw') \quad \text{as $n\to \infty$,}
\end{align*}
where the last convergence stems from the facts that $K\circ \pi$ is continuous since both $K$ and $\pi$ are continuous and that the product measure of the set $\bsw'\ominus H_n^s$ equals $b^{sn}$ for any $\bsw'\in G^s$. Finally we have
\begin{align*}
    & \quad \lim_{n\to \infty} \sum_{\substack{\bsk\in \NN_0^s \\ k_j<b^n, \forall j}}\hat{K}(\bsk,\bsk)W_{\bsk}(\bsz)\overline{W_{\bsk}(\bsw)} \\
    & = \int_{G^s}K(\pi(\bsz\oplus \bsw'),\pi(\bsw\oplus \bsw'))\rd \bsnu(\bsw') \\
    & = \int_{G^s}K(\pi(\bsz)\oplus \pi(\bsw'),\pi(\bsw)\oplus \pi(\bsw'))\rd \bsnu(\bsw') \\
    & = \int_{[0,1]^s}K(\pi(\bsz)\oplus \bssigma,\pi(\bsw)\oplus \bssigma)\rd \bssigma = K_{\ds}(\pi(\bsz), \pi(\bsw)),
\end{align*}
where we use Item~3 of Proposition~\ref{prop:inf_prod} in the third equality. Thus the result follows.
\end{proof}

Under some assumptions on $K$, the mean square worst-case error with respect to a random digital shift is given as follows.
\begin{theorem}\label{thm:ms_worst_case_sym_digital_net}
Let $\Pcal, \Pcal^{\perp}$ be a digital net in $G^s$ and its dual net, respectively, and $\Pcal^{\sym}$ be the symmetrized point set of $\Pcal$. Let $K$ be a continuous reproducing kernel which satisfies $$\int_{[0,1]^s}\sqrt{K(\bsx,\bsx)}\rd \bsx<\infty \quad \text{and}\quad \sum_{\bsk\in \NN_0^s}|\hat{K}(\bsk,\bsk)|<\infty.$$ The mean square worst-case error of a QMC integration over $\pi(\Pcal^{\sym})$ with respect to a random digital shift is given by
  \begin{align*}
     \tilde{e}^2(\pi(\Pcal^{\sym}), K) = \sum_{\bsk\in \Pcal^{\perp}\cap \Ecal^s\setminus \{\bszero\}}\hat{K}(\bsk,\bsk) .
  \end{align*}
\end{theorem}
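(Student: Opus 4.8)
The plan is to reduce the mean square worst-case error to an ordinary worst-case error and then re-run the computation of Theorem~\ref{thm:worst_case_sym_digital_net}. First I would invoke Lemma~\ref{lem:ds_kernel} to write
\[
\tilde{e}^2(\pi(\Pcal^{\sym}), K) = e^2(\pi(\Pcal^{\sym}), K_{\ds}),
\]
so that it suffices to evaluate the deterministic squared worst-case error of $\pi(\Pcal^{\sym})$ for the digitally shift invariant kernel $K_{\ds}$. The key observation is that $K_{\ds}$ has a \emph{diagonal} Walsh expansion: by Lemma~\ref{lem:ds_kernel_walsh_conv} we have $K_{\ds}(\pi(\bsz),\pi(\bsw)) = \sum_{\bsk\in\NN_0^s}\hat{K}(\bsk,\bsk)W_{\bsk}(\bsz)\overline{W_{\bsk}(\bsw)}$, and comparing this with the definition of the Walsh coefficients together with the orthonormality in Item~2 of Proposition~\ref{prop:inf_prod} shows that $\widehat{K_{\ds}}(\bsk,\bsl) = \hat{K}(\bsk,\bsk)$ when $\bsk=\bsl$ and $\widehat{K_{\ds}}(\bsk,\bsl)=0$ otherwise. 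In particular $\sum_{\bsk,\bsl\in\NN_0^s}|\widehat{K_{\ds}}(\bsk,\bsl)| = \sum_{\bsk\in\NN_0^s}|\hat{K}(\bsk,\bsk)|$, which is finite by hypothesis.

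Next I would evaluate the three terms of (\ref{eq:worst-case_error}) with $P=\pi(\Pcal^{\sym})$ and kernel $K_{\ds}$, exactly as in the proof of Theorem~\ref{thm:worst_case_sym_digital_net}. The first term is $\widehat{K_{\ds}}(\bszero,\bszero) = \hat{K}(\bszero,\bszero)$. For the second term, the same chain of equalities (symmetry of the kernel, Items~1 and~4 of Proposition~\ref{prop:inf_prod}, the expansion (\ref{eq:ds_kernel_walsh_conv}) in place of (\ref{eq:kernel_walsh_conv}), and Lemma~\ref{lem:sym_dual_Walsh}) yields $\sum_{\bsk\in\Pcal^{\perp}\cap\Ecal^s}\widehat{K_{\ds}}(\bsk,\bszero) + \sum_{\bsl\in\Pcal^{\perp}\cap\Ecal^s}\widehat{K_{\ds}}(\bszero,\bsl)$; since diagonality forces $\widehat{K_{\ds}}(\bsk,\bszero)=0$ unless $\bsk=\bszero$ and $\bszero\in\Pcal^{\perp}\cap\Ecal^s$, this collapses to $2\hat{K}(\bszero,\bszero)$. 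For the third term, substituting (\ref{eq:ds_kernel_walsh_conv}) and applying Lemma~\ref{lem:sym_dual_Walsh} to both sums over $\Pcal^{\sym}$ gives $\sum_{\bsk,\bsl\in\Pcal^{\perp}\cap\Ecal^s}\widehat{K_{\ds}}(\bsk,\bsl) = \sum_{\bsk\in\Pcal^{\perp}\cap\Ecal^s}\hat{K}(\bsk,\bsk)$, again by diagonality. Combining the three terms,
\[
e^2(\pi(\Pcal^{\sym}), K_{\ds}) = \hat{K}(\bszero,\bszero) - 2\hat{K}(\bszero,\bszero) + \sum_{\bsk\in\Pcal^{\perp}\cap\Ecal^s}\hat{K}(\bsk,\bsk) = \sum_{\bsk\in\Pcal^{\perp}\cap\Ecal^s\setminus\{\bszero\}}\hat{K}(\bsk,\bsk),
\]
which is the claimed formula.

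The step to be careful about is the passage from Theorem~\ref{thm:worst_case_sym_digital_net} to the kernel $K_{\ds}$. One cannot simply quote that theorem as a black box, because it assumes a \emph{continuous} reproducing kernel, whereas $K_{\ds}$ need not be continuous: the $b$-adic digital shift $\oplus$ on $[0,1]^s$ is discontinuous at the carry boundaries, so averaging $K$ over it may destroy continuity. The point, however, is that continuity enters the proof of Theorem~\ref{thm:worst_case_sym_digital_net} only through Lemma~\ref{lem:kernel_walsh_conv}, i.e. solely to guarantee the pointwise absolutely convergent Walsh expansion of the kernel. Lemma~\ref{lem:ds_kernel_walsh_conv} supplies precisely this expansion for $K_{\ds}$ under the single hypothesis $\sum_{\bsk\in\NN_0^s}|\hat{K}(\bsk,\bsk)|<\infty$, so every manipulation of the series (interchanging summation with the finite point-set sums and with the integrals over $G^s$) remains justified and the computation above goes through verbatim.
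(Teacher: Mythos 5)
Your proposal is correct and follows essentially the same route as the paper: reduce to $e^2(\pi(\Pcal^{\sym}),K_{\ds})$ via Lemma~\ref{lem:ds_kernel}, then evaluate the three terms of (\ref{eq:worst-case_error}) using the diagonal Walsh expansion of Lemma~\ref{lem:ds_kernel_walsh_conv} together with Lemma~\ref{lem:sym_dual_Walsh}. Your explicit remark that $K_{\ds}$ need not be continuous and that Lemma~\ref{lem:ds_kernel_walsh_conv} supplies exactly the pointwise expansion needed in place of Lemma~\ref{lem:kernel_walsh_conv} is a careful point the paper leaves implicit, but it does not change the argument.
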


\begin{proof}
We evaluate the three terms of (\ref{eq:worst-case_error}) separately in which $\pi(\Pcal^{\sym})$ and $K_{\ds}$ are substituted into $P$ and $K$, respectively. The first term of (\ref{eq:worst-case_error}) is given by
  \begin{align*}
     \int_{[0,1]^{2s}}K_{\ds}(\bsx,\bsy)\rd \bsx \rd \bsy & = \int_{G^{2s}}K_{\ds}(\pi(\bsz),\pi(\bsw))\rd \bsnu(\bsz) \rd \bsnu(\bsw) \\
     & = \sum_{\bsk\in \NN_0^s}\hat{K}(\bsk,\bsk)\int_{G^{2s}} W_{\bsk}(\bsz)\overline{W_{\bsk}(\bsw)}\rd \bsnu(\bsz) \rd \bsnu(\bsw) \\
     & = \hat{K}(\bszero,\bszero),
  \end{align*}
where we use Item~1 of Proposition~\ref{prop:inf_prod} in the third equality. For the second term of (\ref{eq:worst-case_error}), we have
\begin{align*}
     \frac{2}{|\Pcal^{\sym}|}\sum_{\bsz\in \Pcal^{\sym}}\int_{[0,1]^s}K_{\ds}(\pi(\bsz),\bsy)\rd \bsy & = \frac{2}{|\Pcal^{\sym}|}\sum_{\bsz\in \Pcal^{\sym}}\int_{G^s}K_{\ds}(\pi(\bsz),\pi(\bsw))\rd \bsnu(\bsw) \\
     & = \frac{2}{|\Pcal^{\sym}|}\sum_{\bsz\in \Pcal^{\sym}}\sum_{\bsk\in \NN_0^s}\hat{K}(\bsk,\bsk)W_{\bsk}(\bsz)\int_{G^s}\overline{W_{\bsk}(\bsw)}\rd \bsnu(\bsw) \\
     & = 2\hat{K}(\bszero,\bszero),
  \end{align*}
where we use Item~4 of Proposition~\ref{prop:inf_prod}, Equation~(\ref{eq:ds_kernel_walsh_conv}) and Item~1 of Proposition~\ref{prop:inf_prod} in this order for each of three equalities. Finally, for the last term of (\ref{eq:worst-case_error}), we have
  \begin{align*}
 & \quad \frac{1}{|\Pcal^{\sym}|^2}\sum_{\bsz,\bsw\in \Pcal^{\sym}}K_{\ds}(\pi(\bsz),\pi(\bsw)) \\
 & = \frac{1}{|\Pcal^{\sym}|^2}\sum_{\bsz,\bsw\in \Pcal^{\sym}}\sum_{\bsk\in \NN_0^s}\hat{K}(\bsk,\bsk)W_{\bsk}(\bsz)\overline{W_{\bsk}(\bsw)} \\
 & = \sum_{\bsk\in \NN_0^s}\hat{K}(\bsk,\bsk)\frac{1}{|\Pcal^{\sym}|}\sum_{\bsz\in \Pcal^{\sym}}W_{\bsk}(\bsz)\overline{\frac{1}{|\Pcal^{\sym}|}\sum_{\bsw\in \Pcal^{\sym}}W_{\bsk}(\bsw)} \\
 & = \sum_{\bsk\in \Pcal^{\perp}\cap \Ecal^s}\hat{K}(\bsk,\bsk) ,
  \end{align*}
where we use Equation~(\ref{eq:ds_kernel_walsh_conv}) and Lemma~\ref{lem:sym_dual_Walsh} in the first and third equalities, respectively. Substituting these results into the right-hand side of (\ref{eq:worst-case_error}), the result follows.
\end{proof}

\begin{remark}
The result of Theorem~\ref{thm:ms_worst_case_sym_digital_net} has some similarity to that of \cite[Theorem~16]{GSY1}. When a QMC integration over a ``digitally shifted and then folded'' digital net is considered, the mean square worst-case error with respect to a random digital shift in a RKHS is given by
  \begin{align*}
     \sum_{\bsk\in \Pcal^{\perp}\cap \Ecal^s\setminus \{\bszero\}}\hat{K}(\lfloor \bsk/b\rfloor, \lfloor \bsk/b\rfloor ) ,
  \end{align*}
where again we write $\lfloor \bsx\rfloor = (\lfloor x_1\rfloor, \ldots,\lfloor x_s\rfloor)$ for $\bsx=(x_1,\ldots,x_s)\in \RR^s$. As already mentioned in Remark~\ref{rem:sym_vs_fold}, this approach has a cost advantage over a symmetrized digital net.
\end{remark}

\begin{remark}
Following the same arguments as in \cite[Sections~4--6]{GSY2}, Theorems~\ref{thm:worst_case_sym_digital_net} and \ref{thm:ms_worst_case_sym_digital_net} can be applied to component-by-component (CBC) construction or Korobov construction of good symmetrized (higher order) polynomial lattice rules which achieve high order convergence of the worst-case error in an unanchored Sobolev space of smoothness $\alpha\in \NN$, $\alpha\ge 2$. For instance, for $m\in \NN$ and a prime $b$, the CBC construction can find symmetrized higher order polynomial lattice rules with $b^{m+s}$ points which achieve the worst-case error convergence of $O(b^{-\alpha m+\epsilon})$ with an arbitrary small $\epsilon>0$. Moreover, this construction can be done in $O(s\alpha m b^{\alpha m/2})$ arithmetic operations using $O(b^{\alpha m/2})$ memory.
\end{remark}

\section{Discrepancy bounds for symmetrized Hammersley point sets}\label{sec:ham}
Finally in this paper we prove that symmetrized Hammersley point sets in prime base $b$ achieve the best possible order of $L_p$ discrepancy for all $1\le p< \infty$. According to Definitions~\ref{def:hammersley} and \ref{def:b_sym}, the two-dimensional symmetrized Hammersley point set in base $b$ is given as follows.
\begin{definition}\label{def:sym_hammersley}
Let $b\ge 2$ be a positive integer. For $m\in \NN$, the two-dimensional symmetrized Hammersley point set in base $b$ is a point set consisting of $b^{m+2}$ defined as
  \begin{align*}
    P_H^{\sym} := \left\{ (x_{\bsa},y_{\bsa}): \bsa=(a_1,\ldots,a_{m+2})\in \{0,1,\ldots,b-1\}^{m+2} \right\},
  \end{align*}
where
  \begin{align*}\begin{cases}
    x_{\bsa} = \frac{a_1\oplus a_{m+1}}{b}+\cdots + \frac{a_m\oplus a_{m+1}}{b^m} + \frac{a_{m+1}}{b^{m+1}}+ \frac{a_{m+1}}{b^{m+2}}+\cdots , \\
    y_{\bsa} = \frac{a_m\oplus a_{m+2}}{b}+\cdots + \frac{a_1\oplus a_{m+2}}{b^m} + \frac{a_{m+2}}{b^{m+1}}+ \frac{a_{m+2}}{b^{m+2}}+\cdots .
    \end{cases}
  \end{align*}
\end{definition}

We also consider a truncated version of $P_H^{\sym}$. For a positive integer $n\ge m+2$, the two-dimensional \emph{truncated} symmetrized Hammersley point set in base $b$, denoted by $P_H^{\sym, n}$, is given as
  \begin{align*}
    P_H^{\sym, n} := \left\{ (x^n_{\bsa},y^n_{\bsa}): \bsa=(a_1,\ldots,a_{m+2})\in \{0,1,\ldots,b-1\}^{m+2} \right\} ,
  \end{align*}
where
  \begin{align*}\begin{cases}
    x^n_{\bsa} = \frac{a_1\oplus a_{m+1}}{b}+\cdots + \frac{a_m\oplus a_{m+1}}{b^m} + \frac{a_{m+1}}{b^{m+1}}+ \cdots + \frac{a_{m+1}}{b^n} , \\
    y^n_{\bsa} = \frac{a_m\oplus a_{m+2}}{b}+\cdots + \frac{a_1\oplus a_{m+2}}{b^m} + \frac{a_{m+2}}{b^{m+1}}+ \cdots + \frac{a_{m+2}}{b^n} .
    \end{cases}
  \end{align*}
Here note that $P_H^{\sym, n}$ is a digital net over $\ZZ_b$ with generating matrices of size $n \times (m+2)$
  \begin{align}\label{eq:matrix_sym_ham}
    C_1 = \left(
    \begin{array}{cccccc}
      1 & 0 & \cdots & 0 & 1 & 0 \\
      0 & 1 & \cdots & 0 & 1 & 0 \\
      \vdots & \vdots & \ddots & \vdots & \vdots & \vdots \\
      0 & 0 & \cdots & 1 & 1 & 0 \\
      0 & 0 & \cdots & 0 & 1 & 0 \\
      \vdots & \vdots & \ddots & \vdots & \vdots & \vdots \\
      0 & 0 & \cdots & 0 & 1 & 0 \\
    \end{array} \right) ,
    C_2 = \left(
    \begin{array}{cccccc}
      0 & \cdots & 0 & 1 & 0 & 1 \\
      0 & \cdots & 1 & 0 & 0 & 1 \\
      \vdots & \text{\reflectbox{$\ddots$}} & \vdots & \vdots & \vdots & \vdots \\
      1 & \cdots & 0 & 0 & 0 & 1 \\
      0 & \cdots & 0 & 0 & 0 & 1 \\
      \vdots & \text{\reflectbox{$\ddots$}} & \vdots & \vdots & \vdots & \vdots \\
      0 & \cdots & 0 & 0 & 0 & 1 \\
    \end{array} \right) .
  \end{align}
The following theorem is the main result of this section.
\begin{theorem}\label{thm:disc_sym_ham}
For a prime $b$ and $m\in \NN$, let $P_H^{\sym}$ be the symmetrized Hammersley point set in base $b$ consisting of $N=b^{m+2}$ points. Then, for all $1\le p< \infty$, the $L_p$ discrepancy of $P_H^{\sym}$ is of the best possible order. Namely, for any $1\le p<\infty$ there exists a constant $C_p>0$ such that
  \begin{align*}
    L_p(P_H^{\sym}) \le C_p \frac{\sqrt{m+2}}{b^{m+2}} = C_p \frac{\sqrt{\log_b N}}{N}.
  \end{align*}
\end{theorem}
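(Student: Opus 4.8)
The plan is to expand the local discrepancy function of $P_H^{\sym}$ in the $b$-adic Haar function system and to estimate its $L_p$ norm via the $b$-adic Littlewood--Paley inequality. This is the classical route to optimal $L_p$ discrepancy bounds in dimension two and is the natural $b$-adic analogue of the base-$2$ argument of Larcher and Pillichshammer. The new ingredient is that the membership condition $\bsk\in\Ecal^2$ produced by symmetrization (Lemma~\ref{lem:sym_dual_Walsh}) supplies exactly the cancellation at the ``diagonal'' resolution levels that is missing for plain Hammersley nets, and it is precisely this cancellation that repairs the $L_p$ behaviour for $p<\infty$ while leaving the (already optimal) $L_\infty$ order untouched.

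First I would work with the truncated set $P_H^{\sym,n}$, which by~(\ref{eq:matrix_sym_ham}) is a genuine finite digital net over $\ZZ_b$, prove the bound uniformly in $n\ge m+2$, and then pass to $P_H^{\sym}$ by letting $n\to\infty$, since the local discrepancy functions converge off a null set. On this net I would write the $b$-adic Haar expansion of $\Delta(\cdot;P_H^{\sym,n})$, indexing Haar functions by a pair of resolution levels $(j_1,j_2)$ together with shift and shape parameters. Each Haar coefficient factors into a harmless deterministic ``volume'' part coming from $\prod_j t_j$ and a ``counting'' part recording how the points distribute among the $b$-adic subboxes; everything hinges on the counting part, which is governed by the digital structure of the net.

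The heart of the argument is a lower bound on the minimum Dick weight of the dual net. Using the identity $(\Pcal^{\sym})^{\perp}=\Pcal^{\perp}\cap\Ecal^2$ established after Lemma~\ref{lem:sym_dual_Walsh}, together with the explicit matrices $C_1,C_2$ of~(\ref{eq:matrix_sym_ham}), I would show that every nonzero $\bsk=(k_1,k_2)$ in the dual net has its highest nonzero $b$-adic digit positions forced sufficiently far apart that the associated Haar coefficient attains the optimal size, and, crucially, that the troublesome near-diagonal contributions vanish. Here primality of $b$ is used so that $\ZZ_b$ is a field and the rank computations over the Hammersley matrices are exact; the reflection of digits between the two coordinates in Definition~\ref{def:sym_hammersley} pairs up digit positions, while the constraint $\delta(k_1)\equiv\delta(k_2)\equiv 0\pmod b$ defining $\Ecal$ is exactly what annihilates the diagonal Haar coefficients responsible for the suboptimal $L_p$ order of the unsymmetrized net.

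Finally I would assemble the estimate. Having bounded each surviving Haar coefficient and counted the coefficients at each resolution level, I would apply the $b$-adic Littlewood--Paley inequality to obtain a bound of the form
  \begin{align*}
     L_p(P_H^{\sym,n}) \le C_p \left\| \left( \sum_{(j_1,j_2)} |f_{j_1,j_2}|^2 \right)^{1/2} \right\|_{L_p([0,1]^2)} ,
  \end{align*}
where $f_{j_1,j_2}$ is the level-$(j_1,j_2)$ Haar projection of the discrepancy function. There are only $O(m+2)$ nonvanishing diagonals, each contributing a uniformly controlled amount, and combining them in $\ell^2$ through the square function yields the factor $\sqrt{m+2}$ and hence the claimed bound $C_p\sqrt{\log_b N}/N$; this is exactly the step at which $p<\infty$ is essential, as the same argument does not survive $p=\infty$. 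The main obstacle will be the preceding step: carrying out the explicit Haar-coefficient bookkeeping for $C_1,C_2$ and verifying that the $\Ecal^2$ constraint kills precisely the diagonal levels, since it is there that ordinary Hammersley point sets fail to attain the optimal order.
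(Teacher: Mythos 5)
Your outline is essentially sound and shares the paper's skeleton (truncate, exploit the dual-net structure coming from symmetrization, deduce the optimal $L_p$ order, then remove the truncation), but it diverges in how the two main steps are executed. For the first step, the paper does \emph{not} carry out the Haar expansion and Littlewood--Paley square-function estimate itself: it reduces everything to the single combinatorial statement $\rho_2(P_H^{\sym,n})>2m+1$, verified via linear independence of rows of the explicit matrices in (\ref{eq:matrix_sym_ham}) (Lemmas~\ref{lem:dick} and \ref{lem:linear_ independence}), and then invokes the Dick--Markhasin theorem (Proposition~\ref{prop:Dick-Mark}) as a black box. Your plan to redo the Haar-coefficient bookkeeping and Littlewood--Paley argument by hand is precisely the alternative the paper mentions after Proposition~\ref{prop:disc_trun_sym_ham}; it buys explicit constants $C_p$ but costs substantially more work, and the claim that the constraint $\bsk\in\Ecal^2$ ``kills precisely the diagonal levels'' is exactly where that work is concentrated --- as stated it is a heuristic, and what one actually needs (and what the paper proves, via the generating matrices rather than via $(\Pcal^{\sym})^{\perp}=\Pcal^{\perp}\cap\Ecal^2$ directly) is the quantitative Dick-weight bound. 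For the second step, your soft limit $n\to\infty$ (pointwise a.e.\ convergence of the discrepancy functions plus dominated convergence, using that they are uniformly bounded) does work for $p<\infty$ and is simpler than the paper's Proposition~\ref{prop:effect_trun}, which instead gives the quantitative estimate $L_p(P_H^{\sym})\le L_p(P_H^{\sym,n})+b^{-m-2(n-m-1)/p}$ by counting how many points the truncation can move across the boundary of a test box; note, however, that your uniform bound on $L_p(P_H^{\sym,n})$ is only available for $n>2m$ (not all $n\ge m+2$), since for smaller $n$ the rows of the generating matrices needed in Lemma~\ref{lem:dick} degenerate to zero --- this is harmless for the limiting argument but should be stated correctly.
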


Our proof consists of two parts. We first prove that the $L_p$ discrepancy of $P_H^{\sym, n}$ is of the best possible order for any $1\le p< \infty$ when $n> 2m$. Then we show that the difference between the $L_p$ discrepancies of $P_H^{\sym}$ and $P_H^{\sym,n}$ is small enough that the $L_p$ discrepancy of $P_H^{\sym}$ is still of the best possible order for any $1\le p< \infty$.

\subsection{The minimum Dick weight of point sets}
To prove the first part, we introduce the Dick weight function $\mu_2$ given in \cite{Dick08,Dick14} and the minimum Dick weight $\rho_2(P)$ for a two-dimensional digital net $P$.
\begin{definition}\label{def:dick_weight}
For $k\in \NN$, we denote its $b$-adic expansion by $k=\kappa_1b^{a_1-1}+\kappa_2b^{a_2-1}+\dots+\kappa_vb^{a_v-1}$ such that $\kappa_1,\dots,\kappa_v\in \{1,\dots,b-1\}$ and $a_1>a_2>\dots > a_v>0$. Then the Dick weight function $\mu_2:\NN_0\to \RR$ is defined as
  \begin{align*}
    \mu_2(k) = \begin{cases}
    a_1+a_2 & \text{if $v\ge 2$}, \\
    a_1     & \text{if $v=1$}, \\
    0       & \text{if $k=0$}.
    \end{cases}
  \end{align*}
For vectors $\bsk=(k_1,k_2)\in \NN_0^2$, we define $\mu_2(\bsk):=\mu_2(k_1)+\mu_2(k_2)$. Moreover, let $P$ be a two-dimensional digital net over $\ZZ_b$. Then the minimum Dick weight $\rho_2(P)$ is defined as
  \begin{align*}
    \rho_2(P) := \min_{\bsk\in P^{\perp}\setminus \{(0,0)\}}\mu_2(\bsk) .
  \end{align*}
\end{definition}

The following lemma shows how the minimum Dick weight of a digital net connects with a structure of its generating matrices, see \cite[Chapter~15]{DPbook}.
\begin{lemma}\label{lem:dick}
For $m,n\in \NN$ with $n\ge m$, let $P$ be a digital net over $\ZZ_b$ with generating matrices $C_1,C_2$ of size $n\times m$. For $j=1,2$ and $1\le l\le n$, let $\bsc_{j,l}$ denote the $l$-th row vector of $C_j$. When $l>n$, $\bsc_{j,l}$ denotes the vector consisting of $m$ zeros. Let $\rho$ be a positive integer such that for all $1\le i_{1,v_1}<\dots<i_{1,1}\le 2m$ and $1\le i_{2,v_2}<\dots<i_{2,1}\le 2m$ with $v_j\in \NN_0$ and
  \begin{align*}
    \sum_{l=1}^{\min(v_1,2)}i_{1,l}+\sum_{l=1}^{\min(v_2,2)}i_{2,l}\le \rho ,
  \end{align*}
the vectors $\bsc_{1,i_{1,v_1}},\dots,\bsc_{1,i_{1,1}},\bsc_{2,i_{2,v_2}},\dots,\bsc_{2,i_{2,1}}$ are linearly independent over $\ZZ_b$. Then we have $\rho_2(P)>\rho$.
\end{lemma}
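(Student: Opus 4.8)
The plan is to argue by contraposition: assuming $\rho_2(P)\le \rho$, I would produce a nonzero dual-net vector of small Dick weight and translate it into a nontrivial linear relation among rows of $C_1,C_2$ whose indices fall within the range and weight budget covered by the hypothesis, thereby contradicting the assumed linear independence. The whole argument rests on the dictionary between membership in $\Pcal^{\perp}$ and linear dependence of rows, together with the observation that the Dick weight $\mu_2$ of a dual vector is exactly the sum of the top (at most two) nonzero-digit positions of each coordinate, which is precisely what the quantity $\sum_{l=1}^{\min(v_1,2)}i_{1,l}+\sum_{l=1}^{\min(v_2,2)}i_{2,l}$ in the statement records.

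First I would fix a nonzero $\bsk=(k_1,k_2)\in \Pcal^{\perp}$ with $\mu_2(\bsk)\le \rho$, which exists by the assumption $\rho_2(P)\le \rho$. Writing the $b$-adic expansions as $k_j=\sum_{l=1}^{v_j}\kappa_{j,l}b^{i_{j,l}-1}$ with $\kappa_{j,l}\in\{1,\dots,b-1\}$ and $i_{j,1}>\dots>i_{j,v_j}\ge 1$, the digit vector of $k_j$ pairs with the rows so that $\vec{k}_jC_j=\sum_{l=1}^{v_j}\kappa_{j,l}\bsc_{j,i_{j,l}}$, because the digit of $k_j$ of weight $b^{i-1}$ multiplies precisely the $i$-th row $\bsc_{j,i}$ (and rows beyond $n$ are zero). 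Hence the defining condition $\vec{k}_1C_1\oplus\vec{k}_2C_2=\bszero$ of the dual net becomes the relation
  \begin{align*}
     \sum_{l=1}^{v_1}\kappa_{1,l}\bsc_{1,i_{1,l}}+\sum_{l=1}^{v_2}\kappa_{2,l}\bsc_{2,i_{2,l}}=\bszero\in\ZZ_b^m,
  \end{align*}
which is nontrivial since $\bsk\ne\bszero$ forces $v_1+v_2\ge 1$ and all displayed coefficients are nonzero. By the definition of $\mu_2$, the weight budget is met, $\sum_{l=1}^{\min(v_1,2)}i_{1,l}+\sum_{l=1}^{\min(v_2,2)}i_{2,l}=\mu_2(\bsk)\le\rho$, so these indices are exactly of the form occurring in the hypothesis.

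It remains to verify that all indices $i_{j,l}$ lie in the admissible window $\{1,\dots,2m\}$, which is the delicate point. Since the top position of each coordinate is bounded by its own Dick weight, every index satisfies $i_{j,l}\le i_{j,1}\le \mu_2(\bsk)\le\rho$, so it suffices to know that $\rho\le 2m$. I would deduce this from the hypothesis itself: if one had $\rho\ge 2m+1$, then the single-coordinate choice of indices $1<2<\dots<m+1$ (all $\le 2m$) has top-two sum $(m+1)+m=2m+1\le\rho$, and the hypothesis would force the $m+1$ vectors $\bsc_{1,1},\dots,\bsc_{1,m+1}$ to be linearly independent in $\ZZ_b^m$; this is impossible, since $b$ being prime makes $\ZZ_b$ a field in which $m+1$ vectors cannot be independent (and if $m+1>n$ one of them is simply a zero row). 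Thus the hypothesis already entails $\rho\le 2m$, every $i_{j,l}$ is at most $2m$, and the displayed relation exhibits $\bsc_{1,i_{1,v_1}},\dots,\bsc_{1,i_{1,1}},\bsc_{2,i_{2,v_2}},\dots,\bsc_{2,i_{2,1}}$ as linearly dependent, contradicting the hypothesis; hence $\rho_2(P)>\rho$. The main obstacle is exactly this range check: because $\mu_2$ controls only the two highest positions per coordinate, one must argue separately that the lower positions, and the ambient parameter $\rho$, stay inside the window $\{1,\dots,2m\}$ on which independence has been assumed.
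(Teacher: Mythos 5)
Your argument is correct: the paper itself gives no proof of this lemma (it simply cites Chapter~15 of the Dick--Pillichshammer book), and your contrapositive duality argument---translating a nonzero dual vector of Dick weight at most $\rho$ into a nontrivial vanishing linear combination of the rows of $C_1,C_2$ indexed by its nonzero digit positions---is exactly the standard proof behind that reference. Your separate check that the hypothesis itself forces $\rho\le 2m$ (via the impossibility of $m+1$ independent vectors in $\ZZ_b^m$), so that \emph{all} digit positions and not just the top two per coordinate land in the window $\{1,\dots,2m\}$, correctly closes the only delicate point.
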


Although Definition~\ref{def:dick_weight} and Lemma~\ref{lem:dick} are considered only for the two-dimensional case, they can be generalized to the $s$-dimensional case with any $s\in \NN$. Recently, Dick \cite{Dick14} proved that digital nets over $\ZZ_2$ with large minimum Dick weight achieve the best possible order of the $L_p$ discrepancy for $1<p<\infty$ and for any number of dimensions. For the two-dimensional case, his result also implies the best possible order of the $L_1$ discrepancy with respect to the general lower bound by Hal\'{a}sz \cite{Hal81}. Dick's result was generalized more recently in \cite{Mark15} to digital nets over $\ZZ_b$ for a prime $b$. We specialize their results (\cite[Corollary~2.2]{Dick14} and \cite[Corollary~1.8]{Mark15}) on the $L_p$ discrepancy of digital nets for the two-dimensional case.

\begin{proposition}\label{prop:Dick-Mark}
Let $P$ be a digital net over $\ZZ_b$ consisting of $b^m$ points which satisfies $\rho_2(P)> 2m-t$ for some integer $0\le t\le 2m$. Then for all $1\le p<\infty$ there exists a constant $C_{p,t}$ which depends only on $t$ and $p$ such that we have
  \begin{align*}
    L_p(P) \le C_{p,t} \frac{\sqrt{m}}{b^m}.
  \end{align*}
\end{proposition}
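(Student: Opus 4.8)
The plan is to control the $L_p$ norm of the local discrepancy function $\Delta(\cdot;P)$ through its $b$-adic Haar expansion, in the spirit of Dick \cite{Dick14} and Markhasin \cite{Mark15}, with the Dick weight hypothesis $\rho_2(P)>2m-t$ entering precisely at the level of the Haar coefficient estimates. A convenient first reduction is that $[0,1]^2$ carries a probability measure, so $L_p(P)\le L_2(P)$ for every $1\le p\le 2$; hence it suffices to establish the bound for $2\le p<\infty$ (with $p=2$ included), and the whole range $1\le p<2$ follows for free. This is also what makes the upper bound valid down to $p=1$ without the Hal\'asz-type machinery of \cite{Hal81}, which is only needed for the matching lower bound.

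First I would set up the two-dimensional $b$-adic Haar system, an orthonormal basis of $L_2([0,1]^2)$ indexed by a level $\boldsymbol{j}=(j_1,j_2)$, a position, and a pair of nonzero digits, each basis function supported on an elementary $b$-adic box of sidelengths $b^{-j_1}\times b^{-j_2}$. Expanding $\Delta(\cdot;P)$ in this basis, I would compute its coefficients. Since $\Delta$ is product-type, each coefficient factorizes over the two coordinates, and through the standard relation between Haar functions and Walsh functions (each one-dimensional Haar function of level $j$ lies in the span of the $\wal_k$ whose leading nonzero digit sits at position $j$) the coefficient at level $\boldsymbol{j}$ is expressed via the Walsh coefficients $\hat{\Delta}(\boldsymbol{k})$ with $\boldsymbol{k}$ having leading digits at positions $(j_1,j_2)$. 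For a digital net these Walsh coefficients vanish unless $\boldsymbol{k}\in\Pcal^{\perp}$ (off the dual net the empirical part cancels the volume part, cf.\ Lemma~\ref{lem:dual_Walsh} and Definition~\ref{def:dual_net}), and when they do not vanish their magnitude is bounded by a power $b^{-\mu_2(\boldsymbol{k})}$ arising from the two-fold smoothing produced by the indefinite integration $\int_0^{t}\wal_k$; this is exactly the mechanism that makes the Dick weight $\mu_2$ the governing quantity.

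With the coefficients in hand I would invoke the hypothesis. Every $\boldsymbol{k}\in\Pcal^{\perp}\setminus\{\bszero\}$ satisfies $\mu_2(\boldsymbol{k})\ge\rho_2(P)>2m-t$ by Definition~\ref{def:dick_weight}, so the nonvanishing coefficients are forced to be of size $\lesssim b^{-(2m-t)}$. This splits the Haar indices into two regimes: the low-weight boxes, where the net equidistributes and the coefficient takes only its small value coming from the smooth volume part, and the high-weight boxes, which are so sparse (their count is cut by the density $b^{-m}$ of the dual net) that their aggregate contribution is negligible. I would then bound, for each two-dimensional level $\boldsymbol{j}$, the $L_p$ norm of the associated Haar block $\Delta_{\boldsymbol{j}}$, using the per-box coefficient estimates together with the disjointness of the supports at a fixed level.

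Finally I would assemble the pieces with a Littlewood--Paley (product martingale) square-function inequality, valid for $2\le p<\infty$: as the level blocks form a two-parameter martingale difference sequence for the $b$-adic product filtration, one has $\|\Delta\|_{L_p}\lesssim_p\big(\sum_{\boldsymbol{j}}\|\Delta_{\boldsymbol{j}}\|_{L_p}^2\big)^{1/2}$, the inner Minkowski step in $L_{p/2}$ being what requires $p\ge2$. Summing the block norms, the threshold $2m-t$ isolates only of order $m$ scales contributing at the critical size $b^{-m}$, and the $\ell^2$ aggregation over those $\sim m$ scales produces exactly the factor $\sqrt{m}$, giving $L_p(P)\lesssim_{p,t}\sqrt{m}\,b^{-m}$. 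I expect the main obstacle to be this third step together with the coefficient estimate: pinning down the size of each Haar coefficient in terms of $\mu_2(\boldsymbol{k})$ via the linear (in)dependence of rows of the generating matrices (Lemma~\ref{lem:dick}), and then organizing the level sum so that $2m-t$ cleanly separates the negligible fine scales from the $O(m)$ genuinely contributing ones. Carrying this out for a general prime base $b$ is precisely the content of \cite[Corollary~2.2]{Dick14} and \cite[Corollary~1.8]{Mark15}, which the statement specializes to $s=2$.
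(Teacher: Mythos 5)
Your proposal is correct and takes essentially the same route as the paper: the paper offers no independent proof of this proposition, deferring entirely to \cite[Corollary~2.2]{Dick14} and \cite[Corollary~1.8]{Mark15}, and your outline --- the reduction $L_p(P)\le L_2(P)$ for $1\le p<2$, the $b$-adic Haar expansion of $\Delta(\cdot;P)$ with coefficient estimates driven by the Walsh--Haar correspondence and the weight $\mu_2$ on the dual net, and the Littlewood--Paley square-function assembly with the Minkowski step in $L_{p/2}$ for $2\le p<\infty$ --- is an accurate reconstruction of exactly the argument carried out in those cited works. Since you, like the paper, ultimately defer the technical core (the per-level Haar coefficient bounds for general prime $b$) to the same two references, there is nothing to flag.
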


From Lemma~\ref{lem:dick} and Proposition~\ref{prop:Dick-Mark}, as the first part of the proof of Theorem~\ref{thm:disc_sym_ham}, it suffices to show the linear independence properties of the generating matrices (\ref{eq:matrix_sym_ham}) such that the minimum Dick weight of $P_H^{\sym, n}$ is large.

\begin{lemma}\label{lem:linear_ independence}
For $m,n\in \NN$ with $n> 2m$, let $C_1$ and $C_2$ denote the generating matrices (\ref{eq:matrix_sym_ham}). For $j=1,2$ and $1\le l\le n$, $\bsc_{j,l}$ denotes the $l$-th row vector of $C_j$. The following sets of the vectors are linearly independent over $\ZZ_b$:
\begin{enumerate}
\item $\{\bsc_{1,1},\dots,\bsc_{1,r},\bsc_{2,1},\dots,\bsc_{2,m+1-r}\}$ for $0\le r\le m+1$,
\item $\{\bsc_{1,1},\dots,\bsc_{1,m+1},\bsc_{2,r}\}$ and $\{\bsc_{2,1},\dots,\bsc_{2,m+1},\bsc_{1,r}\}$ for $1\le r\le m$,
\item $\{\bsc_{1,1},\dots,\bsc_{1,r},\bsc_{2,1},\dots,\bsc_{2,m-r},\bsc_{j,s}\}$ for $j=1,2$, $0\le r\le m$, and $m+1\le s\le n$,
\item $\{\bsc_{1,1},\dots,\bsc_{1,r_{1,2}},\bsc_{1,r_{1,1}},\bsc_{2,1},\dots,\bsc_{2,r_{2,2}},\bsc_{2,r_{2,1}}\}$ for $0< r_{1,2}<r_{1,1}\le m$ and $0< r_{2,2}<r_{2,1}\le m$ such that $r_{1,1}+r_{1,2}+r_{2,1}+r_{2,2}\le 2m+1$.
\end{enumerate}
\end{lemma}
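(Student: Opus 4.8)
The plan is to argue directly over the field $\ZZ_b$ (recall $b$ is prime) using the very sparse structure of $C_1,C_2$: only the two ``tail'' coordinates, the $(m+1)$-st and $(m+2)$-nd, are shared by many rows. Writing $\bse_1,\dots,\bse_{m+2}$ for the standard basis of $\ZZ_b^{m+2}$, one reads off from (\ref{eq:matrix_sym_ham}) that
\[
\bsc_{1,l}=\begin{cases}\bse_l+\bse_{m+1}&1\le l\le m,\\ \bse_{m+1}&l>m,\end{cases}
\qquad
\bsc_{2,l}=\begin{cases}\bse_{m+1-l}+\bse_{m+2}&1\le l\le m,\\ \bse_{m+2}&l>m.\end{cases}
\]
Hence every chosen row of index $>m$ (a \emph{tail row}) equals $\bse_{m+1}$ or $\bse_{m+2}$, so two tail rows from the same matrix coincide; the first thing to note is therefore that each of the four sets contains at most one tail row in total, which is immediate from the index ranges in the statement.

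The engine of the argument is the projection $\phi\colon \ZZ_b^{m+2}\to\ZZ_b^{m}$ forgetting the last two coordinates. Given a selection, let $A\subseteq\{1,\dots,m\}$ be the set of indices $\le m$ chosen from $C_1$ and let $B\subseteq\{1,\dots,m\}$ be the set of \emph{reversed} indices $m+1-l$ for the indices $l\le m$ chosen from $C_2$; these are exactly the coordinates hit in $\{1,\dots,m\}$ by the two families, and $\phi$ sends the corresponding low rows to the distinct unit vectors $\bse_a$ ($a\in A$) and $\bse_b$ ($b\in B$), and every tail row to $\bszero$. Applying $\phi$ to a vanishing linear combination kills the tail rows and, coordinate by coordinate in $\{1,\dots,m\}$, forces every coefficient to vanish except those attached to the shared coordinates $A\cap B$. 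Substituting back leaves a small system supported on $A\cap B$ and the two tail coordinates; since a $C_1$-row and a $C_2$-row sharing a low coordinate $c$ still differ in the tail (their difference is $\bse_{m+1}-\bse_{m+2}$), reading off the $(m+1)$-st and $(m+2)$-nd coordinates shows this residual system has only the trivial solution as soon as $|A\cap B|\le 1$. I would record this once as a single criterion: \emph{if at most one tail row is chosen in total and $|A\cap B|\le 1$, then the selected rows are linearly independent.}

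It then remains to compute $A\cap B$ in the four cases, where the reversal $l\mapsto m+1-l$ built into $C_2$ does the work. In case~1 the low rows give $A=\{1,\dots,r\}$ and $B=\{r,\dots,m\}$, so $A\cap B=\{r\}$ (and $A\cap B=\emptyset$ at the endpoints $r=0,m+1$, which carry a single tail row). In case~2 one has, for the first set, $A=\{1,\dots,m\}$ and $B=\{m+1-r\}$, so $A\cap B=\{m+1-r\}$ and $\bsc_{1,m+1}$ is the lone tail row; the second set is handled identically with the roles of $C_1$ and $C_2$ interchanged. In case~3 the complementary intervals $A=\{1,\dots,r\}$ and $B=\{r+1,\dots,m\}$ are disjoint, so $A\cap B=\emptyset$ and the single tail row $\bsc_{j,s}$ is harmless. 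Each of these meets the criterion immediately.

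The main obstacle is case~4, and this is exactly where the weight budget $r_{1,1}+r_{1,2}+r_{2,1}+r_{2,2}\le 2m+1$ enters. Here $A=\{1,\dots,r_{1,2}\}\cup\{r_{1,1}\}$ and $B=\{m+1-r_{2,2},\dots,m\}\cup\{m+1-r_{2,1}\}$, and the goal is $|A\cap B|\le 1$. The plan is to show first that the two index blocks $\{1,\dots,r_{1,2}\}$ and $\{m+1-r_{2,2},\dots,m\}$ are disjoint: since $r_{1,1}>r_{1,2}$ and $r_{2,1}>r_{2,2}$, the budget gives $2r_{1,2}+2r_{2,2}+2\le 2m+1$, hence $r_{1,2}+r_{2,2}\le m-1$, which is exactly disjointness. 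Consequently any common coordinate must involve one of the two exceptional indices, i.e.\ $A\cap B\subseteq\{r_{1,1},\,m+1-r_{2,1}\}$. Were $|A\cap B|=2$, both exceptional coordinates would be shared, forcing simultaneously $r_{1,1}\ge m+1-r_{2,2}$ (so $r_{1,1}+r_{2,2}\ge m+1$) and $m+1-r_{2,1}\le r_{1,2}$ (so $r_{1,2}+r_{2,1}\ge m+1$); adding these two inequalities yields $r_{1,1}+r_{1,2}+r_{2,1}+r_{2,2}\ge 2m+2$, contradicting the budget. Thus $|A\cap B|\le 1$ and the criterion applies. The only delicate point is the bookkeeping above—enumerating how two coordinates could coincide and checking each possibility forces one of the two inequalities—after which the budget rules out a double coincidence and independence is automatic.
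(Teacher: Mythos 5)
The paper does not actually supply a proof of this lemma: it is omitted with a pointer to the analogous Lemma~3.1 of \cite{Goda14}, so your argument is being compared against a blank. As a self-contained substitute it is correct. Your reading of the rows is right ($\bsc_{1,l}=\bse_l+\bse_{m+1}$ for $l\le m$, $\bse_{m+1}$ for $l>m$; $\bsc_{2,l}=\bse_{m+1-l}+\bse_{m+2}$ for $l\le m$, $\bse_{m+2}$ for $l>m$), each of the four sets indeed contains at most one tail row, and the reduction via the projection onto the first $m$ coordinates is sound: it kills every low coefficient except those indexed by $A\cap B$, where it forces $\alpha_c=-\beta_{m+1-c}$, and then the two equations coming from coordinates $m+1$ and $m+2$ (only one of which can carry the tail coefficient) give the trivial solution precisely when $|A\cap B|\le 1$. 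The computations of $A\cap B$ in cases 1--3 are immediate, and in case 4 the budget argument is the right one: $r_{1,2}+r_{2,2}\le m-1$ separates the two blocks, and a double coincidence would force $r_{1,1}+r_{2,2}\ge m+1$ and $r_{1,2}+r_{2,1}\ge m+1$, overshooting $2m+1$. The one point to make explicit in a final write-up is the degenerate subcase $r_{1,1}=m+1-r_{2,1}$, where the two ``exceptional'' indices coincide: there $A\cap B$ is contained in a singleton outright (since $m+1-r_{2,1}=r_{1,1}>r_{1,2}$ excludes it from the block $\{1,\dots,r_{1,2}\}$), so your conclusion still holds, but the sentence ``both exceptional coordinates would be shared'' silently assumes they are distinct. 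With that one sentence added, the proof is complete and is in the same elementary linear-algebra spirit as the omitted reference.
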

\noindent Since the proof is quite similar to that of \cite[Lemma~3.1]{Goda14}, we omit it. Using the linear independence properties of (\ref{eq:matrix_sym_ham}) shown in the above lemma together with Lemma~\ref{lem:dick} and Proposition~\ref{prop:Dick-Mark}, we have the following.
\begin{proposition}\label{prop:disc_trun_sym_ham}
For $m,n\in \NN$ with $n> 2m$, let $P_H^{\sym,n}$ be the truncated symmetrized Hammersley point set in base $b$ consisting of $N=b^{m+2}$ points. The minimum Dick weight of $P_H^{\sym,n}$ is larger than $2m+1$, that is,
  \begin{align*}
    \rho_2(P_H^{\sym,n}) > 2m+1.
  \end{align*}
This implies that the $L_p$ discrepancy of $P_H^{\sym,n}$ is bounded by
  \begin{align*}
    L_p(P_H^{\sym,n}) \le C_{p} \frac{\sqrt{m+2}}{b^{m+2}}= C_p \frac{\sqrt{\log_b N}}{N},
  \end{align*}
for any $p\in [1,\infty)$ with a positive constant $C_p$ depending only on $p$.
\end{proposition}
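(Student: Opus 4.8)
The plan is to prove the two assertions in turn: the lower bound $\rho_2(P_H^{\sym,n})>2m+1$ on the minimum Dick weight, and then the $L_p$ bound as an immediate consequence of Proposition~\ref{prop:Dick-Mark}. For the Dick-weight bound I would apply Lemma~\ref{lem:dick} with $\rho=2m+1$, which reduces the claim to showing that, for every pair of index sets $1\le i_{1,v_1}<\dots<i_{1,1}\le 2m$ and $1\le i_{2,v_2}<\dots<i_{2,1}\le 2m$ whose weighted sum $\sum_{l=1}^{\min(v_1,2)}i_{1,l}+\sum_{l=1}^{\min(v_2,2)}i_{2,l}$ is at most $2m+1$, the corresponding rows of the matrices $C_1,C_2$ in (\ref{eq:matrix_sym_ham}) are linearly independent over $\ZZ_b$. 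The key structural fact I would record first is that, for each $j$, all rows $\bsc_{j,l}$ with $l\ge m+1$ coincide (they equal $\bse_{m+1}$ for $j=1$ and $\bse_{m+2}$ for $j=2$). Hence two such ``tail'' rows from the same matrix are dependent; but the weighted-sum constraint excludes this, since two distinct positions $\ge m+1$ contribute at least $(m+1)+(m+2)=2m+3>2m+1$. The same budget argument shows a tail row in each matrix would contribute at least $(m+1)+(m+1)=2m+2>2m+1$, so at most one tail row can occur overall.

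With this in hand I would carry out a short case analysis on the numbers $v_1,v_2$ of selected rows and on the position of a possible tail row, matching each admissible selection to a subset of one of the four families in Lemma~\ref{lem:linear_ independence}. If either matrix contributes at most one row, independence is immediate: the unique selected row meeting column $m+1$ (respectively $m+2$) forces its coefficient to vanish, after which the remaining rows occupy distinct coordinate directions. If both matrices contribute at least two rows and no tail is present, then writing the two largest indices as $i_{1,1}>i_{1,2}$ and $i_{2,1}>i_{2,2}$, all selected $C_1$-indices lie in $\{1,\dots,i_{1,2}\}\cup\{i_{1,1}\}$ and all selected $C_2$-indices in $\{1,\dots,i_{2,2}\}\cup\{i_{2,1}\}$, with $i_{1,1}+i_{1,2}+i_{2,1}+i_{2,2}\le 2m+1$; this is exactly the configuration of item~4. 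Finally, if a single tail row is present alongside structured rows in both matrices, the residual budget of at most $m$ left after subtracting the tail position forces the largest structured index $p$ in $C_1$ and $q$ in $C_2$ to satisfy $p+q\le m$, so the structured rows embed into complementary initial segments $\{1,\dots,r\}$ and $\{1,\dots,m-r\}$ as in item~3, with items~1 and~2 absorbing the remaining boundary configurations. Since every admissible index set is thus contained in a linearly independent set, Lemma~\ref{lem:dick} yields $\rho_2(P_H^{\sym,n})>2m+1$.

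For the second assertion I would apply Proposition~\ref{prop:Dick-Mark} to $P_H^{\sym,n}$, which consists of $N=b^{m+2}$ points. Writing $M=m+2$ for the exponent, the bound just proved reads $\rho_2(P_H^{\sym,n})>2m+1=2M-3$, so the hypothesis $\rho_2>2M-t$ holds with the constant $t=3$. Proposition~\ref{prop:Dick-Mark} then gives $L_p(P_H^{\sym,n})\le C_{p,3}\sqrt{m+2}/b^{m+2}$ for all $1\le p<\infty$, and setting $C_p:=C_{p,3}$ finishes the proof. I expect the case bookkeeping of the second paragraph to be the main obstacle: one must check that, once the at-most-one-tail structure is accounted for, the weighted-sum budget of $2m+1$ leaves exactly enough room to push every admissible selection into one of the four listed families, the tightest situations arising precisely when the budget is saturated.
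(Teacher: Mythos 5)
Your proposal is correct and follows exactly the route the paper intends: Lemma~\ref{lem:dick} combined with the four independence families of Lemma~\ref{lem:linear_ independence} to get $\rho_2(P_H^{\sym,n})>2m+1=2(m+2)-3$, then Proposition~\ref{prop:Dick-Mark} with $t=3$; the paper itself omits this case analysis by reference to \cite[Lemma~3.3]{Goda14}, and your ``at most one tail row'' bookkeeping supplies precisely the missing details. The only quibbles are cosmetic: the index range in Lemma~\ref{lem:dick} should be $2(m+2)$ rather than $2m$ since the matrices have $m+2$ columns (harmless, as the weight budget caps indices at $2m+1\le n$), and the phrase about remaining rows ``occupying distinct coordinate directions'' should really say that any rows of a single $C_j$ with at most one tail row are independent because the structured rows restrict to distinct unit vectors on the first $m$ coordinates.
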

\noindent Since the proof on the minimum Dick weight of $P_H^{\sym,n}$ is quite similar to that of \cite[Lemma~3.3]{Goda14}, we omit it and just give some comments on the $L_p$ discrepancy bound instead. As $P_H^{\sym,n}$ consists of $b^{m+2}$ points and the minimum Dick weight of $P_H^{\sym,n}$ is larger than $2m+1=2(m+2)-3$, the $t$-value in Proposition~\ref{prop:Dick-Mark} of $P_H^{\sym,n}$ always equals 3 for any $m\in \NN$. This is why we simply write $C_p$ instead of $C_{p,t}$ in the above proposition. This $t$-value is same as that for two-dimensional folded Hammersley point sets as given in \cite[Lemma~3.3]{Goda14}. Moreover, if one wants to get an explicit value of $C_p$, it might be better to use the Littlewood-Paley inequality in conjunction with the Haar coefficients of the local discrepancy function for $P_H^{\sym,n}$ as done in \cite{HKP15}, instead of our proof based on the linear independence properties. However, this is beyond the scope of this paper.

\subsection{Effect of the truncation on the $L_p$ discrepancy}
As the second part of the proof of Theorem~\ref{thm:disc_sym_ham}, we show that the difference between the $L_p$ discrepancies of $P_H^{\sym}$ and $P_H^{\sym,n}$ is small enough that the $L_p$ discrepancy of $P_H^{\sym}$ is still of the best possible order for any $1\le p< \infty$. Namely we show the following.
\begin{proposition}\label{prop:effect_trun}
For $m,n\in \NN$ with $n\ge m+2$, let $P_H^{\sym}$ be the symmetrized Hammersley point set in base $b$ consisting of $N=b^{m+2}$ points and $P_H^{\sym,n}$ be its truncated point set. Then we have
  \begin{align}\label{eq:effect_trun}
    L_p(P_H^{\sym}) \le L_p(P_H^{\sym,n}) + \frac{1}{b^{m+2(n-m-1)/p}} ,
  \end{align}
for any $p\in [1,\infty)$.
\end{proposition}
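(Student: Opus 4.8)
The plan is to bound the $L_p$ discrepancy difference by controlling, pointwise in $\bst$, the difference of the local discrepancy functions of the two point sets, and then to observe that this difference is supported on (and controlled by the measure of) the small set where truncation actually moves a point across a box boundary. First I would note that $P_H^{\sym}$ and $P_H^{\sym,n}$ are indexed by the same parameter set $\bsa \in \{0,\ldots,b-1\}^{m+2}$ via a bijection $(x_{\bsa},y_{\bsa}) \leftrightarrow (x^n_{\bsa},y^n_{\bsa})$, and that the two points agree in their first $n$ base-$b$ digits in each coordinate. Indeed, comparing the definitions, $x_{\bsa}$ and $x^n_{\bsa}$ have identical digits in positions $1,\ldots,n$; they differ only from position $n+1$ onward, where $x_{\bsa}$ continues the constant string $a_{m+1}$ while $x^n_{\bsa}$ terminates (digits zero). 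Hence $|x_{\bsa}-x^n_{\bsa}| \le b^{-n}$ and likewise $|y_{\bsa}-y^n_{\bsa}| \le b^{-n}$; in fact both differences are at most $b^{-n}(b-1)/(b-1)\cdot b^{-0}$, bounded by $b^{-n}$.

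Next, by the triangle inequality for the $L_p$-norm it suffices to bound $\|\Delta(\cdot;P_H^{\sym}) - \Delta(\cdot;P_H^{\sym,n})\|_{L_p}$, since $L_p(P_H^{\sym}) \le L_p(P_H^{\sym,n}) + \|\Delta(\cdot;P_H^{\sym}) - \Delta(\cdot;P_H^{\sym,n})\|_{L_p}$. The linear (volume) term $\prod_j t_j$ cancels in this difference, so I must estimate
\begin{align*}
  \Delta(\bst;P_H^{\sym}) - \Delta(\bst;P_H^{\sym,n}) = \frac{1}{N}\sum_{\bsa}\left( 1_{[\bszero,\bst)}(x_{\bsa},y_{\bsa}) - 1_{[\bszero,\bst)}(x^n_{\bsa},y^n_{\bsa}) \right).
\end{align*}
Each summand is nonzero only when the box $[\bszero,\bst)$ separates the pair of nearby points, i.e.\ when some coordinate of $\bst$ falls between the truncated and untruncated coordinate values. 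Since those values differ by at most $b^{-n}$, for a fixed $\bsa$ the set of $\bst$ for which the $x$-coordinates are separated has $t_1$ confined to an interval of length $\le b^{-n}$ (hence $\bst$-measure $\le b^{-n}$), and similarly for the $y$-coordinate. Therefore the integrand $|\Delta(\bst;P_H^{\sym}) - \Delta(\bst;P_H^{\sym,n})|$, viewed as an average over $\bsa$, is supported on a set whose measure I can control, and on that set it is bounded by $1$.

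The main obstacle is to turn this pointwise/support picture into the stated exponent $b^{-m-2(n-m-1)/p}$, which requires being careful about how many of the $N=b^{m+2}$ indices $\bsa$ can simultaneously have a separated coordinate for a given $\bst$, and about the interplay between the $1/N$ normalization and the $L_p$ integration. The cleanest route is to bound the $L_p$-norm by Hölder/interpolation: write the difference as a sum over the two coordinate directions of terms each bounded in sup-norm by a quantity of order $b^{m+2}\cdot b^{-n}/N = b^{-(n-m-2)}$ coming from the number of indices whose separating digit string matches, and bounded in $L_1$-norm by the measure estimate $\le b^{-n}$ times the fraction of active indices. Interpolating $\|g\|_p \le \|g\|_\infty^{1-1/p}\|g\|_1^{1/p}$ then yields an exponent of the form $b^{-m}\cdot b^{-2(n-m-1)/p}$ after collecting the powers of $b$, matching \eqref{eq:effect_trun}. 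The delicate bookkeeping is precisely in counting the active indices as a function of where the boundary digit lies, so I would carry out that digit-counting argument explicitly and defer the volume and Hölder estimates to routine computation.
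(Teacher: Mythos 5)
Your opening moves match the paper's: the Minkowski inequality reduces the claim to bounding $\lVert \Delta(\cdot;P_H^{\sym})-\Delta(\cdot;P_H^{\sym,n})\rVert_{L_p}$, and the observation that each coordinate moves by at most $b^{-n}$ under truncation is correct. The gap is in the quantitative step. With the ingredients you actually have, the interpolation $\lVert g\rVert_p\le \lVert g\rVert_\infty^{1-1/p}\lVert g\rVert_1^{1/p}$ cannot produce the exponent $2(n-m-1)/p$. The sharp sup-norm bound is $\lVert g\rVert_\infty\le b^2/b^{m+2}=b^{-m}$ (for fixed $\bst$ at most $b^2$ indices $\bsa$ are active, because $x^n_{\bsa}$ takes only $b^{m+1}$ distinct values, each with multiplicity $b$, and consecutive values are at least $b^{-n}$ apart once $n\ge m+2$); your figure ``$b^{m+2}\cdot b^{-n}/N=b^{-(n-m-2)}$'' is internally inconsistent and is not the relevant count. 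The per-index measure estimate gives only $\lVert g\rVert_1\le 2b^{-n}$ (for each $\bsa$ the set of $\bst$ separating $(x_{\bsa},y_{\bsa})$ from $(x^n_{\bsa},y^n_{\bsa})$ is an L-shaped region of measure at most $2b^{-n}$). Interpolating these yields $\lVert g\rVert_p\lesssim b^{-m-(n-m)/p}$, which is strictly weaker than the required $b^{-m-2(n-m-1)/p}$ for every $n>m+2$ --- and the proposition is invoked with $n>2m$. So the assertion that the exponents ``collect'' to the stated bound does not follow from your plan.

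The ingredient you are missing is the one the paper leans on: because $x^n_{\bsa}\le x_{\bsa}\le x^n_{\bsa}+b^{-n}$ and $y^n_{\bsa}\le y_{\bsa}\le y^n_{\bsa}+b^{-n}$, the summand can only equal $0$ or $-1$, and the paper replaces the count of separated pairs by the count of $\bsa$ with $x^n_{\bsa}<t_1\le x_{\bsa}$ \emph{and} $y^n_{\bsa}<t_2\le y_{\bsa}$ \emph{simultaneously}. This product condition confines $\bst$ to a Cartesian product of two unions of at most $b^{m+1}$ intervals of length $b^{-n}$ each, hence a set of measure at most $b^{-2(n-m-1)}$, on which $|g|\le b^2/b^{m+2}$; the bound then follows from $(\sup|g|)\cdot\bigl(\mathrm{meas}(\mathrm{supp}\, g)\bigr)^{1/p}$ with no interpolation. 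Your ``sum over the two coordinate directions'' decomposition loses exactly this squaring of the support measure. One caveat you should be aware of if you try to repair this: your set-theoretic description (``some coordinate of $\bst$ falls between the truncated and untruncated values'') is actually the literally correct one --- the event $(x^n_{\bsa},y^n_{\bsa})\in[\bszero,\bst)$, $(x_{\bsa},y_{\bsa})\notin[\bszero,\bst)$ also occurs when only one coordinate is separated and the other merely satisfies $y^n_{\bsa}<t_2$ (e.g.\ $t_2$ close to $1$). These single-coordinate configurations contribute on a set of measure of order $b^{-(n-m-1)}$, and they are precisely what caps your route at the exponent $(n-m)/p$; any complete write-up must either show why they can be discarded or accept a weaker truncation bound of the form $b^{-m-1-(n-m-1)/p}$ (which, for $n>2m$, is still small enough to preserve Theorem~\ref{thm:disc_sym_ham}).
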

\noindent Before providing the proof of Proposition~\ref{prop:effect_trun}, it should be mentioned that we arrive at the result of Theorem~\ref{thm:disc_sym_ham} by combining Propositions~\ref{prop:disc_trun_sym_ham} and \ref{prop:effect_trun} since the second term on the right-hand side of (\ref{eq:effect_trun}) is small enough for any $p\in [1,\infty)$ that it does not affect the order of the $L_p$ discrepancy when $n> 2m$.

\begin{proof} From the definition of the $L_p$ discrepancy we have
  \begin{align*}
    L_p(P_H^{\sym}) & = \left( \int_{[0,1]^2}|\Delta(\bst;P_H^{\sym})|^p \rd \bst \right)^{1/p} \\
    & = \left( \int_{[0,1]^2}|\Delta(\bst;P_H^{\sym})-\Delta(\bst;P_H^{\sym,n})+\Delta(\bst;P_H^{\sym,n})|^p \rd \bst \right)^{1/p} \\
    & \le L_p(P_H^{\sym,n}) + \left( \int_{[0,1]^2}|\Delta(\bst;P_H^{\sym})-\Delta(\bst;P_H^{\sym,n})|^p \rd \bst \right)^{1/p} ,
  \end{align*}
where we use the Minkowski inequality. Thus, we shall focus on the second term on the rightmost-hand side in the following.

Now for any $\bst\in [0,1]^2$ we have
  \begin{align*}
    \Delta(\bst;P_H^{\sym})-\Delta(\bst;P_H^{\sym,n}) = \frac{1}{b^{m+2}}\sum_{\bsa\in \{0,1,\ldots,b-1\}^{m+2}}\left( 1_{[\bszero,\bst)}(x_{\bsa},y_{\bsa})-1_{[\bszero,\bst)}(x^n_{\bsa},y^n_{\bsa})\right) .
  \end{align*}
For the summand on the right-hand side, we have 
  \begin{align*}
    1_{[\bszero,\bst)}(x_{\bsa},y_{\bsa})-1_{[\bszero,\bst)}(x^n_{\bsa},y^n_{\bsa}) = \begin{cases}
    1  & \text{if $(x_{\bsa},y_{\bsa})\in [\bszero,\bst)$ and $(x^n_{\bsa},y^n_{\bsa})\notin [\bszero,\bst)$,} \\
    -1 & \text{if $(x_{\bsa},y_{\bsa})\notin [\bszero,\bst)$ and $(x^n_{\bsa},y^n_{\bsa})\in [\bszero,\bst)$,} \\
    0 & \text{otherwise.}
    \end{cases}
  \end{align*}
For a given set $\bsa=\{a_1,\ldots,a_{m+2}\}$, we have
  \begin{align*}
    x^n_{\bsa}\le x_{\bsa} & = \frac{a_1\oplus a_{m+1}}{b}+\cdots + \frac{a_m\oplus a_{m+1}}{b^m} + \frac{a_{m+1}}{b^{m+1}}+ \cdots + \frac{a_{m+1}}{b^n} +  \frac{a_{m+1}}{b^{n+1}}+\cdots \\
    & = x^n_{\bsa}+\frac{a_{m+1}}{b^n(b-1)}\le x^n_{\bsa}+\frac{1}{b^n} .
  \end{align*}
Similarly we have
  \begin{align*}
    y^n_{\bsa}\le y_{\bsa} = y^n_{\bsa}+\frac{a_{m+2}}{b^n(b-1)} \le y^n_{\bsa}+\frac{1}{b^n} .
  \end{align*}
This implies that we never have the case where $(x_{\bsa},y_{\bsa})\in [\bszero,\bst)$ and $(x^n_{\bsa},y^n_{\bsa})\notin [\bszero,\bst)$. Thus, for any $\bst\in [0,1]^2$ we have
  \begin{align*}
    & \quad |\Delta(\bst;P_H^{\sym})-\Delta(\bst;P_H^{\sym,n})| \\
    & = \frac{|\left\{ \bsa\in \{0,1,\ldots,b-1\}^{m+2}: (x_{\bsa},y_{\bsa})\notin [\bszero,\bst)\; \text{and}\; (x^n_{\bsa},y^n_{\bsa})\in [\bszero,\bst) \right\}|}{b^{m+2}} \\
    & = \frac{|\left\{ \bsa\in \{0,1,\ldots,b-1\}^{m+2}: x^n_{\bsa}<t_1\le x_{\bsa}\; \text{and}\; y^n_{\bsa}<t_2\le y_{\bsa} \right\}|}{b^{m+2}} \\
    & \le \frac{|\left\{ \bsa\in \{0,1,\ldots,b-1\}^{m+2}: x^n_{\bsa}<t_1\le x^n_{\bsa}+1/b^n\; \text{and}\; y^n_{\bsa}<t_2\le y^n_{\bsa}+1/b^n \right\}|}{b^{m+2}}.
  \end{align*}
From the definition of $P_H^{\sym,n}$, the first coordinate $x^n_{\bsa}$ takes the value of the form
  \begin{align*}
    \frac{c_1}{b^m}+\frac{c_2}{b-1}\left( \frac{1}{b^m}-\frac{1}{b^n}\right)
  \end{align*}
for $c_1,c_2\in \NN_0$ with $0\le c_1<b^m$ and $0\le c_2<b$. Since $x^n_{\bsa}$ does not depend on $a_{m+2}$, each value repeats $b$ times. The length of the interval between two distinct consecutive elements of $x^n_{\bsa}$ equals either
  \begin{align*}
    \frac{1}{b-1}\left( \frac{1}{b^m}-\frac{1}{b^n}\right) \quad \text{or}\quad \frac{1}{b^n}.
  \end{align*}
Note that the former length is greater than $1/b^n$ when $n\ge m+2$.
This also holds true for the second coordinate $y^n_{\bsa}$. Therefore, when there exist $c_1,c_2,d_1,d_2\in \NN_0$ with $0\le c_1,d_1<b^m$ and $0\le c_2,d_2<b$ such that
  \begin{align*}
    \frac{c_1}{b^m}+\frac{c_2}{b-1}\left( \frac{1}{b^m}-\frac{1}{b^n}\right) < t_1 \le \frac{c_1}{b^m}+\frac{c_2}{b-1}\left( \frac{1}{b^m}-\frac{1}{b^n}\right)+\frac{1}{b^n}
  \end{align*}
and
  \begin{align*}
    \frac{d_1}{b^m}+\frac{d_2}{b-1}\left( \frac{1}{b^m}-\frac{1}{b^n}\right) < t_2 \le \frac{d_1}{b^m}+\frac{d_2}{b-1}\left( \frac{1}{b^m}-\frac{1}{b^n}\right)+\frac{1}{b^n} ,
  \end{align*}
there are at most $b^2$ points of $P_H^{\sym,n}$ which satisfy $x^n_{\bsa}<t_1\le x^n_{\bsa}+1/b^n$ and $y^n_{\bsa}<t_2\le y^n_{\bsa}+1/b^n$. Note that the Lebesgue measure of the set consisting of such $\bst$ is given by $1/b^{2(n-m-1)}$. On the other hand, when there do not exist $c_1,c_2,d_1,d_2\in \NN_0$ with $0\le c_1,d_1<b^m$ and $0\le c_2,d_2<b$ which satisfy the above condition, there is no point of $P_H^{\sym,n}$ which satisfy $x^n_{\bsa}<t_1\le x^n_{\bsa}+1/b^n$ and $y^n_{\bsa}<t_2\le y^n_{\bsa}+1/b^n$. Therefore, we have
  \begin{align*}
    \left( \int_{[0,1]^2}|\Delta(\bst;P_H^{\sym})-\Delta(\bst;P_H^{\sym,n})|^p \rd \bst \right)^{1/p} & \le \frac{b^2}{b^{m+2}}\cdot \left( \frac{1}{b^{2(n-m-1)}}\right)^{1/p} \\
    & = \frac{1}{b^{m+2(n-m-1)/p}}.
  \end{align*}
Hence the result follows.
\end{proof}


\end{document}